\documentclass[12pt,twoside,reqno]{amsart}
\usepackage[utf8]{inputenc}
\usepackage[colorlinks=true, citecolor=blue, linkcolor=blue]{hyperref}
\usepackage{mathptmx, amsmath, amssymb, amsfonts, amsthm, mathptmx, enumerate, color,mathrsfs}
\usepackage{graphicx}
\usepackage{tikz}
\usepackage{lineno}
\usepackage{epstopdf}
\newcommand{\lap}{\mbox{$\Delta$}}
\newtheorem{theorem}{Theorem}[section]
\newtheorem{lemma}[theorem]{Lemma}

\newtheorem{corollary}[theorem]{Corollary}

\newcommand{\la}{\lambda}
\newcommand{\be}{\begin{equation}}
\newcommand{\ee}{\end{equation}}

\theoremstyle{definition}

\newtheorem{remark}{Remark}
\numberwithin{equation}{section}
\allowdisplaybreaks[4]

\begin{document}
\setcounter{page}{1}

\vspace*{2.0cm}
\title[ Pointwise arbitrarily  high-order interior estimates ]
{Pointwise arbitrarily  high-order interior estimates for mixed local and nonlocal elliptic equations}
\author[ Pengyan Wang,   Leyun Wu, Chilin Zhang]{ Pengyan Wang$^a$, Leyun Wu$^b$, Chilin Zhang$^{c}$\textsuperscript{$ \ast $}}\thanks{$^{\ast}$Corresponding author.}
\maketitle
\vspace*{-0.6cm}

\begin{center}
{\footnotesize

$^a$   School of Mathematics and Statistics, Xinyang Normal University, Xinyang,   China\\

$^b$   School of Mathematics,  South China University of Technology,  Guangzhou,  China \\

$^c$   School of Mathematical Sciences, Fudan University, Shanghai,  China

}\end{center}

\vskip 4mm {\footnotesize \noindent {\bf Abstract.} In this paper, we mainly focus on 
pointwise, arbitrarily high-order interior estimates for the mixed local–nonlocal elliptic equation
\begin{equation*}
    (-\Delta)^su(x)-\Delta u(x)=f(x),\quad x\in B_r(0)
\end{equation*}
with $0<s<1$. 
The main challenges in this setting are the absence of an explicit Green function and the ineffectiveness of standard bootstrap arguments.

Our approach overcomes these difficulties via the Campanato iteration method, which inductively constructs polynomials approximating the solution. Using the fact that all functions are locally $s$-harmonic up to a small error significantly reduces the computational complexity when studying the regularity of the fractional Laplacian acting on these polynomials. Finally, the regularity estimates are obtained from the analysis of a manageable recursive inequality system.

\noindent {\bf Keywords.}
Mixed local and nonlocal equations,  Pointwise regularity, High-order interior estimates, Campanato iteration method.

\noindent {\bf 2020 Mathematics Subject Classification.}
Primary 35M10, 35R11, Secondary  35B65.}

\renewcommand{\thefootnote}{}

\footnotetext{E-mail addresses: wangpy@xynu.edu.cn (P. Wang); leyunwu@scut.edu.cn (L. Wu); zhangchilin@fudan.edu.cn (C. Zhang).}

\section{Introduction}

The study of equations driven by mixed local and nonlocal operators has attracted considerable attention in recent years, largely motivated by models in physics, finance, and biology where both the classical random walk and the L'{e}vy flight coexist; see \cite{DV} and the references therein. For instance, Su et al. \cite{SVWZ1} obtained the boundedness of any weak solution for mixed local and nonlocal elliptic equations via the Moser iteration method. Wang and Wu \cite{WW} studied Liouville theorems for mixed local and nonlocal elliptic and parabolic equations with indefinite nonlinearities. Dipierro et al. \cite{DSVZ} established the existence of positive solutions for a mixed-order nonlinear Schr"odinger equation. Very recently, Chen and Wu \cite{CW} obtained refined regularity results for mixed local–nonlocal equations.

For the pure fractional Laplacian $(-\lap)^s$, a comprehensive regularity theory has been developed over the past two decades. The foundational extension technique of Caffarelli and Silvestre \cite{CS} recasts $(-\lap)^s$ in $\mathbb R^n$ as a Dirichlet-to-Neumann map for a degenerate (but local) elliptic equation in $\mathbb{R}^{n+1}_{+}$, thereby opening the door to purely local PDE methods. Based on the extension method, Caffarelli and Silvestre established H\"{o}lder regularity, Harnack inequalities, and a complete Schauder theory for fractional equations. Schauder estimates for nonlocal fully nonlinear equations were developed by Jin  and Xiong \cite{JX}, who proved that if $f\in C^\alpha$ and if $u$ solves a nonlocal fully nonlinear elliptic equation with a translation-invariant kernel, then $u\in C^{2s+\alpha}$ in the interior. Their work extended the classical $C^{2,\alpha}$ theory of Caffarelli \cite{C} for local fully nonlinear equations (see also the monograph of Caffarelli and Cabr\'{e} \cite{CC}) to the nonlocal realm. Ros-Oton and Serra \cite{RS} subsequently developed an exhaustive boundary regularity theory for the Dirichlet problem associated with the fractional Laplacian, proving optimal $C^s$ regularity up to the boundary and providing the first complete Schauder estimates in H\"{o}lder spaces. Recently, Li and Wu \cite{LW} developed a pointwise Schauder estimate for the pure fractional equation $(-\lap)^su=f$. By constructing a sequence of Taylor-type approximation polynomials, they established arbitrarily high-order interior regularity of the solution.

For the mixed operator $(-\lap)^s-\lap$, the existing literature is substantially sparser. Su et al. \cite{SVWZ1} obtained the interior $L^\infty$, $C^{1,\alpha}$, and $W^{2,p}$ estimates for solutions with zero exterior conditions. Later in \cite{SVWZ}, the same group further established interior and boundary $C^{1,\alpha}$ and $C^{2,\alpha}$ estimates, where the exterior condition is still zero.

Compared to the fractional Laplacian, the mixed local and nonlocal operator lacks a precise Green function. Although sharp Green function estimates for $(-\lap)^s-\lap$ in a $C^{1,1}$ open set were established by Chen et al. \cite{CKSV}, these results are not sufficient to establish higher-order Schauder interior estimates for mixed local and nonlocal equations in bounded domains. The superposition of the classical Laplacian and the fractional Laplacian destroys the homogeneity that characterizes each component separately. Because the two operators scale differently under dilations, the resulting equation is not scale‑invariant, and no simple integral representation is available for its solutions. This fundamental obstruction renders many classical approaches inapplicable: blow‑up arguments become delicate, perturbation techniques fail to decouple the two terms, and the bootstrap machinery that works so efficiently for purely local or purely nonlocal equations cannot be directly transferred. Our approach addresses these challenges by employing the Campanato iteration method, which inductively constructs polynomials that approximate the solution.

An alternative approach is to use the Campanato iteration, which was originally established by Campanato \cite{Co} to study Schauder estimates for solutions to second-order elliptic equations, and was later developed by Avellaneda and Lin \cite{AL} to address perturbed second-order elliptic equations. The Campanato iteration relies less on the Green function, so it is more robust and can be applied to fully non-linear equations. For example, in \cite{C1}, Caffarelli used a similar iteration method to obtain the Schauder estimate (as well as $W^{2,p}$ estimate) of the Monge-Amp\`{e}re equation, where the approximation functions are solutions to $\det D^{2}u\equiv1$. For the linearized Monge-Amp\`{e}re equation with rough data, see Goldman, Huesmann and Otto \cite{GHO}.

Throughout this paper, we denote by $B_r(x)$ the ball centered at $x$ with radius $r$. When $x=0$, we abbreviate $B_r(0)$ as $B_r$.

In this paper, we mainly investigate pointwise arbitrarily  high-order interior estimates of the following mixed local and nonlocal elliptic equation (with $r\leq 1$ and $0<s<1$)
\begin{equation}\label{7}
(-\lap)^su(x)-\lap u(x)= f(x),\quad x \in B_r.
\end{equation}
It is commonly acknowledged that the usual fractional Laplacian is defined by a singular integral
\begin{equation}\label{fL}
(-\Delta)^s u(x)=C_{n, s}P.V. \int_{\mathbb{R}^n}\frac{u(x)-u(y)}{|x-y|^{n+2s}}dy=C_{n, s} \lim_{\epsilon \to 0}\int_{\mathbb{R}^n\backslash B_\epsilon(x)}\frac{u(x)-u(y)}{|x-y|^{n+2s}}dy.
\end{equation}
Here, $P.V.$ stands for the Cauchy principal value, and $C_{n, s}$ is a dimensional constant depending on $n$ and $s$, precisely given by
\begin{equation*}
    C_{n, s}=\frac{ 4^s s\Gamma(\frac{n+2s}{2})}{\pi^{\frac{n}{2}}\Gamma(1-s)},
\end{equation*}
where $\Gamma$ denotes the Gamma function. 
It is easy to verify that the integral on the right hand side of \eqref{fL} is well defined
if  $u\in C_{loc}^{1, 1}(\mathbb{R}^n)\cap \mathcal{L}_{2s}$, where
$$
\mathcal{L}_{2s}=\left\{u\in L_{loc}^1 \,\Big| \int_{\mathbb{R}^n}\frac{|u(x)|}{1+|x|^{n+2s}}dx<\infty \right\}
$$
endowed naturally with the norm
$$
\|u\|_{\mathcal{L}_{2s}}:= \int_{\mathbb{R}^n}\frac{|u(x)|}{1+|x|^{n+2s}}dx.
$$

\begin{theorem}\label{t1}
Let $u\in C^2(B_r)\cap C^0(\overline{B_r})\cap \mathcal{L}_{2s}$ be a solution of \eqref{7} for some $r\leq1$. For any $m\geq0$, assume that there exists an $m$-order polynomial $F(x)$, such that
\begin{equation*}
    \|f-F\|_{L^{\infty}(B_{\rho})}\leq\omega(\rho)\cdot\rho^{m},
\end{equation*}
where $\omega(\rho)$ is a bounded function for $\rho\in[0,r]$.
\begin{itemize}
    \item[(1)] If $m\geq0$, then there exists an $(m+1)$-order polynomial $P(x)$, such that
    \begin{align*}
        &\|P\|_{L^{\infty}(B_{r})}+\sup_{\rho\leq r}\Big(\frac{\|u-P\|_{L^{\infty}(B_{\rho})}}{(\frac{\rho}{r})^{m+2}\ln{(\frac{2r}{\rho})}}\Big)\\
        \leq&C(n,s,m)\Big\{\|u\|_{L^{\infty}(B_{r})}+r^{2}\int_{B_{r}^{c}}\frac{|u(x)|}{|x|^{n+2s}}dx+r^{2}\|F\|_{L^{\infty}(B_{r})}+r^{m+2}\sup_{\rho\leq r}\omega(\rho)\Big\}.
    \end{align*}
    \item[(2)] If $m\geq0$ and $\displaystyle\int_{0}^{r}\frac{\omega(t)}{t}dt<\infty$, then there exists a uniform constant $\mu>0$ and an $(m+2)$-order polynomial $P(x)$, such that
    \begin{equation*}
        \|P\|_{L^{\infty}(B_{r})}\leq C(n,s,m)\Big\{\|u\|_{L^{\infty}(B_{r})}+r^{2}\int_{B_{r}^{c}}\frac{|u(x)|}{|x|^{n+2s}}dx+r^{2}\|F\|_{L^{\infty}(B_{r})}+r^{m+2}\int_{0}^{r}\frac{\omega(t)}{t}dt\Big\},
    \end{equation*}
    and
    \begin{align*}
        \|u-P\|_{L^{\infty}(B_{\rho})}\leq&C(n,s,m)(\frac{\rho}{r})^{m+2+\mu}\cdot\Big\{\|u\|_{L^{\infty}(B_{r})}+r^{2}\int_{B_{r}^{c}}\frac{|u(x)|}{|x|^{n+2s}}dx+r^{2}\|F\|_{L^{\infty}(B_{r})}\Big\}\\
        &+C(n,s,m)(\frac{\rho}{r})^{m+2}\cdot\Big\{r^{m+2}\int_{0}^{\rho}\frac{\omega(t)}{t}dt+r^{m+2}\rho^{\mu}\int_{\rho}^{r}\frac{\omega(t)}{t^{1+\mu}}dt\Big\}.
    \end{align*}
    \item[(3)] If $m\geq0$ and $\omega(\rho)=O(\rho^{\alpha})$ for some $\alpha\in(0,1)$, then there exists an $(m+2)$-order polynomial $P(x)$, such that
    \begin{align*}
        &\|P\|_{L^{\infty}(B_{r})}+\sup_{\rho\in[0,r]}\Big((\frac{r}{\rho})^{m+2+\alpha}\|u-P\|_{L^{\infty}(B_{\rho})}\Big)\\
        \leq&C(n,s,\alpha,m)\Big\{\|u\|_{L^{\infty}(B_{r})}+r^{2}\int_{B_{r}^{c}}\frac{|u(x)|}{|x|^{n+2s}}dx+r^{2}\|F\|_{L^{\infty}(B_{r})}+r^{m+2+\alpha}\cdot\sup_{t\in[0,r]}\frac{\omega(t)}{t^{\alpha}}\Big\}.
    \end{align*}
\end{itemize}
\end{theorem}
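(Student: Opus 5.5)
We will prove Theorem \ref{t1} by Campanato iteration on dyadic balls $B_{\rho_k}$ with $\rho_k=\lambda^k r$, where $\lambda\in(0,1/2)$ is a fixed small constant. First, by scaling $x\mapsto rx$ we reduce to $r=1$. After scaling the operator becomes $r^{2-2s}(-\Delta)^s-\Delta$, so we must work with the family $\varepsilon(-\Delta)^s-\Delta$ with $\varepsilon\in(0,1]$. All constants will be uniform in $\varepsilon$; this uniformity is what compensates for the lack of scale invariance. We also normalize so that the right-hand bracket equals $1$.

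\textbf{Step 1: the approximation lemma.} Let $v$ solve $\varepsilon(-\Delta)^s v-\Delta v=0$ in $B_1$ with $v=u$ outside $B_1$. Two facts are needed, uniformly in $\varepsilon$:
\begin{itemize}
\item the comparison bound $\|u-v\|_{L^\infty(B_1)}\le C\|f\|_{L^\infty(B_1)}$, which follows from the maximum principle with the barrier $(1-|x|^2)$ (note $-\Delta$ dominates);
\item interior $C^{k}$ estimates of the form $\|D^k v\|_{L^\infty(B_{1/2})}\le C_k\big(\|v\|_{L^\infty(B_1)}+\int_{B_1^c}|v|/|x|^{n+2s}\big)$, obtained by differentiating the equation and using translation invariance together with a Bernstein-type or energy argument.
\end{itemize}
Consequently $v$ is approximated on $B_\lambda$ by its Taylor polynomial of degree $m+2$, with error $C\lambda^{m+3}$.

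\textbf{Step 2: polynomial corrections for $F$.} Given the degree-$m$ polynomial $F$, we construct a polynomial $Q$ of degree $m+2$ with $-\Delta Q=F$ and then correct for the term $\varepsilon(-\Delta)^sQ$. The key observation, taken from the abstract, is that a polynomial truncated outside $B_1$ is locally $s$-harmonic up to a smooth error whose size is controlled. We therefore solve $\varepsilon(-\Delta)^s Q-\Delta Q=F+(\text{smooth remainder})$ recursively in degree: we subtract $Q$ and absorb the remainder into the next step. This produces the ``manageable recursive inequality system'' for the polynomial coefficients at each scale.

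\textbf{Step 3: the iteration.} At scale $\rho_k$ we have a polynomial $P_k$ of degree $m+2$ with $\|u-P_k\|_{L^\infty(B_{\rho_k})}\le A_k\rho_k^{m+2}$. We rescale $w(x)=(u-P_k)(\rho_kx)/\rho_k^{m+2}$. The rescaled right-hand side is bounded by $\omega(\rho_k)$ plus the defect coming from $(-\Delta)^sP_k$. Applying Steps 1–2 yields $P_{k+1}$ with
\[
A_{k+1}\le \lambda^{\mu}A_k+C\omega(\rho_k)+C(\text{tail}),\qquad \|P_{k+1}-P_k\|_{L^\infty(B_{\rho_k})}\le C(A_k+\omega(\rho_k))\rho_k^{m+2}.
\]
The tail term $\rho_k^{2}\int_{B_{\rho_k}^c}|u-P_k|/|x|^{n+2s}$ is controlled by summing over the previous annuli; because $\rho^{2}\cdot\rho^{-2s}$ gains $\rho^{2-2s}$, it contributes a geometric convolution $\sum_{j<k}\lambda^{(2-2s)(k-j)}A_j$.

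\textbf{Step 4: solving the recursion.} We sum the recursion in the three regimes:
\begin{itemize}
\item For bounded $\omega$ in part (1), we only match degree $m+1$. The top-degree coefficient increments are then bounded but not summable, which gives the factor $\ln(2r/\rho)$ with $P$ of degree $m+1$.
\item In part (2) the Dini condition makes $\sum\omega(\rho_k)$ finite, so $P_k$ converges to $P$. The discrete convolution $\sum_{j\le k}\lambda^{\mu(k-j)}\omega(\rho_j)$ produces exactly the two integrals in the statement.
\item Part (3) follows from part (2) with $\mu>\alpha$, after choosing $\lambda$ accordingly.
\end{itemize}

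\textbf{Main obstacle.} The hardest point will be Steps 2–3. We must control $(-\Delta)^s$ of the accumulated polynomials $P_k$, which grow at infinity and so must be truncated. We must also show that the resulting nonlocal errors obey the recursive system with constants independent of $k$ and $\varepsilon$. I would first try to handle this by writing $P_k-P_{k-1}$ cut off at scale $\rho_{k-1}$ and using the $s$-harmonic-up-to-error fact for each piece separately.
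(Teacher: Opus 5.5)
There is a genuine gap, and it is the step you call the main obstacle. The proposal never controls the nonlocal contribution of the truncated accumulated polynomials, and the mechanism you state for it in Step 3 is false. The tail $\rho_k^{2}\int_{B_{\rho_k}^c}|u-P_k|\,|x|^{-n-2s}\,dx$ (finite only after $P_k$ is cut off somewhere) does not yield a decaying convolution $\sum_{j<k}\lambda^{(2-2s)(k-j)}A_j$.

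On the annulus $B_{\rho_j}\setminus B_{\rho_{j+1}}$ you only know $|u-P_k|\lesssim\rho_j^{m+2}\big(A_j+\sum_{i=j}^{k-1}(A_i+\omega(\rho_i))\big)$. So after dividing by $\rho_k^{m+2}$, the tail is of order $\sum_{j<k}\rho_j^{2-2s}(\rho_j/\rho_k)^{m}(\cdots)$. On top of that, the region where $P_k$ has been cut off contributes at least $\rho_k^{-m}\int_{B_r^c}|u|\,|x|^{-n-2s}\,dx$. This is the true size of the raw tail even for smooth $u$, not an artifact of crude bounds. For $m=0$ it does not decay in $k$, and for $m\ge1$ it grows like $\lambda^{-mk}$.

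The same raw tail enters your interior $C^{m+3}$ bound for $v$ in Step 1, and hence the Taylor error of $v$ on $B_\lambda$. With such a term, the recursion $A_{k+1}\le\lambda^{\mu}A_k+\cdots$ cannot force $A_k\to0$. Parts (2) and (3) are then out of reach, and for $m\ge1$ not even part (1) survives. Step 2 does not repair this. $(-\Delta)^s$ of a truncated polynomial is a smooth non-polynomial function, so there is no recursion in degree. Moreover, the Taylor coefficients of $(-\Delta)^s\widetilde P_{k+1}$ depend on $P_{k+1}$ itself, a circularity you do not address.

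The missing idea is that the raw tail must never enter. The degree-$m$ Taylor part of the nonlocal term has to be absorbed into the polynomial, and only its $(m+1)$-st derivative may appear in the error. The paper does this as follows.

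\begin{itemize}
\item It sets $\widetilde P_k=P_k\chi_{B_{r_k}}+u\chi_{B_{r_k}^c}$ and lets $G_k$ be the degree-$m$ Taylor polynomial of $(-\Delta)^s\widetilde P_k$.
\item It obtains $P_{k+1}$ from the Poisson problem $\Delta h_k=G_k-F$, so that $\Delta P_{k+1}=G_k-F$. Using $G_k$ rather than $G_{k+1}$ breaks the circularity. The price is the mismatch $G_k-G_{k-1}$, which produces the extra $r_k^{2-2s}\epsilon_{k-1}$ term and makes the recursion two-step.
\item What makes everything local is that $\widetilde P_j-\widetilde P_{j-1}$ vanishes outside $B_{r_{j-1}}$. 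Lemma 2.1, built on the almost-$s$-harmonic fact of Lemma 2.3, then gives $r_j^{l}\|D^l(-\Delta)^s(\widetilde P_j-\widetilde P_{j-1})\|_{L^\infty(B_{r_j/2})}\le Cr_j^{-2s}(\epsilon_j+\epsilon_{j-1})$, with no tail at all.
\item Telescoping yields $\|D^{m+1}(-\Delta)^s\widetilde P_k\|_{L^\infty(B_{r_k/2})}\le C\mathcal O_k$. The remainder $(-\Delta)^s\widetilde P_k-G_k$ then contributes $\tau_k=r_k^{m+3}\mathcal O_k$, which satisfies $\tau_{k+1}=\lambda^{m+3}\tau_k+\lambda^{-(m+1+2s)}r_{k+1}^{2-2s}\epsilon_{k+1}$. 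This, not the raw tail, is the decaying convolution.
\end{itemize}

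Your last paragraph names the right tool, but this is the core of the proof rather than a detail to be filled in.

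Two smaller points.
\begin{itemize}
\item $(1-|x|^2)_+$ is not a supersolution when $s\ge1/2$, because $(-\Delta)^s$ of its convex kink tends to $-\infty$ at $\partial B_1$. Use $(1-|x|^2)_+^s$ instead, for which both $(-\Delta)^s$ and $-\Delta$ are nonnegative in $B_1$.
\item Approximating by a solution of the mixed equation requires interior $C^{m+3}$ estimates that are uniform in $\varepsilon$. The paper avoids these by approximating with the Poisson problem and absorbing the resulting $\delta^{-2s}$ gluing error through the barrier of Lemma 2.4.
\end{itemize}
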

\begin{remark}
\begin{itemize}
    \item[(i)] Our results hold without any restrictions on the exterior condition imposed in $B_{r}^{c}$ (e.g., zero exterior condition).
    \item[(ii)] The coefficient of the integral $\int_{B_{r}^{c}}\frac{|u(x)|}{|x|^{n+2s}}dx$ here exhibits a smaller $r^2$ instead of $r^{2s}$, because the integer-order Laplacian dominates when the radius $r$ is very small. 
\end{itemize}
\end{remark}

As a consequence of Theorem~\ref{t1}, we obtain higher-order interior Schauder estimates for mixed local–nonlocal equations of the form \eqref{7}. Before stating the result, we first introduce some notation and function spaces.

For an integer $k \geq 0$, the space $C^k(\overline {B_r})$ consists of functions whose derivatives up to order $k$ are continuous in $\overline {B_r}$, endowed with the norm
\[
\|f\|_{C^k(\overline {B_r})} := \sum_{|\beta| \leq k} \sup_{x \in {B_r}} |D^\beta f(x)|.
\]

For $\alpha \in (0,1)$ and an integer $k\geq 0$, the space
$C^{k,\alpha}(\overline{B_r})$ consists of functions
$f\in C^k(\overline{B_r})$ such that all derivatives of order $k$
are $\alpha$-H\"older continuous, namely,
\[
|D^\beta f(x)-D^\beta f(y)|
\leq C|x-y|^\alpha,
\qquad
\forall x,y\in B_r,\ x\neq y,
\]
for all multi-indices $\beta$ satisfying $|\beta|=k$.
The corresponding norm is defined by
\begin{equation*}
\|f\|_{C^{k,\alpha}(\overline{B_r})}
:=
\|f\|_{C^k(\overline{B_r})}
+
\sum_{|\beta|=k}
\sup_{\substack{x,y\in B_r\\x\neq y}}
\frac{|D^\beta f(x)-D^\beta f(y)|}
{|x-y|^\alpha}.
\end{equation*}

The Ln-Lipschitz space $C^{k,\ln L}(\overline {B_r})$ is defined as the set of functions $f \in C^k(\overline {B_r})$ such that all derivatives of order $k$ satisfy a logarithmic Lipschitz continuity condition, namely
\[
|D^\beta f(x) - D^\beta f(y)| \leq C\, |x-y| \, \big|\ln \min(|x-y|,1/2)\big|, \quad \forall x,y \in {B_r},\ x \neq y,
\]
for all multi-indices $\beta$ with $|\beta|=k$. The corresponding norm is given by
\begin{equation*}
\|f\|_{C^{k,\ln L}(\overline {B_r})}
= \|f\|_{C^{k}(\overline {B_r})}
+ \sum_{|\beta| = k} \sup_{\substack{x,y \in {B_r} \\ x \neq y}}
\frac{|D^\beta f(x) - D^\beta f(y)|}{|x-y| \, \big|\ln \min(|x-y|,1/2)\big|}.
\end{equation*}

\medskip

Similarly, 
the Dini space $C^{k,\mathrm{Dini}}(\overline{B_r})$
consists of functions $f\in C^k(\overline{B_r})$ such that all derivatives of order $k$
are Dini continuous. More precisely, for each multi-index $\beta$
with $|\beta|=k$, define the modulus of continuity
\[
\omega_{D^\beta f}(t)
:=
\sup\big\{
|D^\beta f(x)-D^\beta f(y)|:
x,y\in B_r,\ |x-y|\leq t
\big\}.
\]
We say that $f\in C^{k,\mathrm{Dini}}(\overline{B_r})$
if
\[
\int_0^{\mathrm{diam}(B_r)}
\frac{\omega_{D^\beta f}(t)}{t}\,dt
<\infty
\]
for every multi-index $\beta$ satisfying $|\beta|=k$.
The corresponding norm is defined by
\begin{equation*}
\|f\|_{C^{k,\mathrm{Dini}}(\overline{B_r})}
:=
\|f\|_{C^k(\overline{B_r})}
+
\sum_{|\beta|=k}
\int_0^{\mathrm{diam}(B_r)}
\frac{\omega_{D^\beta f}(t)}{t}\,dt.
\end{equation*}

\begin{corollary}
Let $0<s<1$, $0<\alpha<1$, $r\leq1$, and $m\geq0$. Assume that $u\in C^2(B_r)\cap C^0(\overline{B_r})\cap \mathcal{L}_{2s}$ is a solution to the equation \eqref{7}. Then the following conclusions hold:
\begin{itemize}
    \item[(1)] If $f\in C^m(B_r)$, then $u\in C^{m+1,ln L}(B_{r/2})$.
    \item[(2)] If $f\in C^{m,\text{Dini}} (B_r) $, then $ u \in C^{m+2} (B_{r/2})$.
    \item[(3)] If $f\in C^{m,\alpha}(B_r)$, then $ u\in C^{m+2,\alpha}(B_{r/2})$.
\end{itemize}\end{corollary}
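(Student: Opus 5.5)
The corollary follows from Theorem~\ref{t1} applied at every point $x_0\in B_{r/2}$ on the ball $B_{r/2}(x_0)\subset B_r$, followed by the standard step from pointwise polynomial approximation to H\"older-type regularity.

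\textbf{Step 1: local approximation of $f$.} Fix $x_0\in B_{r/2}$. The equation \eqref{7} is translation invariant, so $v(x)=u(x_0+x)$ solves $(-\Delta)^s v-\Delta v=f(x_0+\cdot)$ in $B_{r/2}$. Let $F_{x_0}$ be the $m$-th order Taylor polynomial of $f$ at $x_0$. Then:
\begin{itemize}
\item in case (1), $\|f(x_0+\cdot)-F_{x_0}\|_{L^\infty(B_\rho)}\le \omega(\rho)\rho^m$ with $\omega$ bounded by $C\|f\|_{C^m}$;
\item in case (2), $\omega(\rho)\le C\max_{|\beta|=m}\omega_{D^\beta f}(\rho)$, which is Dini;
\item in case (3), $\omega(\rho)\le C[D^mf]_\alpha\rho^\alpha$.
\end{itemize}
The right-hand side quantities in Theorem~\ref{t1} are uniform in $x_0$. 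Indeed, $\|v\|_{L^\infty(B_{r/2})}\le\|u\|_{L^\infty(B_r)}$, and $\|F_{x_0}\|\le C\|f\|_{C^m}$. The tail $\int_{B_{r/2}^c}|v|/|x|^{n+2s}$ is bounded by $C(r)\big(\|u\|_{L^\infty(B_r)}+\|u\|_{\mathcal L_{2s}}\big)$, because $|x|\ge r/2$ there and $1+|x_0+x|^{n+2s}\le C(r)|x|^{n+2s}$.

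\textbf{Step 2: polynomials at each point.} Theorem~\ref{t1} gives, for each $x_0$, a polynomial $P_{x_0}$ with uniformly bounded coefficients. It has degree $m+1$ in case (1) and $m+2$ in cases (2) and (3), and satisfies
\[
|u(x)-P_{x_0}(x)|\le M\,\sigma(|x-x_0|),\qquad |x-x_0|\le r/2.
\]
Here $\sigma(\rho)=\rho^{m+2}\ln(2r/\rho)$ in case (1) and $\sigma(\rho)=\rho^{m+2+\alpha}$ in case (3). In case (2), $\sigma(\rho)=o(\rho^{m+2})$, with the modulus $\tilde\omega(\rho)=\rho^\mu+\int_0^\rho\frac{\omega(t)}{t}dt+\rho^\mu\int_\rho^r\frac{\omega(t)}{t^{1+\mu}}dt\to0$; the last term tends to $0$ by dominated convergence.

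\textbf{Step 3: identify the coefficients as derivatives.} Since $u\in C^2$ and the approximation is $o(\rho^{m+1})$ uniformly at every point, the coefficients of $P_{x_0}$ are the Taylor coefficients of $u$ at $x_0$.

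\textbf{Step 4: compare polynomials at nearby points.} Take $x_0,y_0$ with $d=|x_0-y_0|$ small. On $B_d(x_0)$ we have $|P_{x_0}-P_{y_0}|\le CM\sigma(2d)$. By equivalence of norms on polynomials, after rescaling to the unit ball, the coefficient of order $k$ of $P_{x_0}-P_{y_0}$, expanded around $x_0$, is bounded by $CM\sigma(2d)d^{-k}$. Re-expanding $P_{y_0}$ around $x_0$ and arguing inductively downward from the top degree, this yields:
\begin{itemize}
\item $|D^\beta P_{x_0}(x_0)-D^\beta P_{y_0}(y_0)|\le CM\sigma(2d)d^{-|\beta|}$ for top-degree $|\beta|$ (degree $m+2$ in cases (2) and (3));
\item for $|\beta|=m+1$ in case (1), the modulus $d\ln(1/d)$.
\end{itemize}
Consequently:
\begin{itemize}
\item in case (3), $D^{m+2}u$ is $C^\alpha$;
\item in case (1), $D^{m+1}u$ is Ln-Lipschitz;
\item in case (2), $D^{m+2}u(x_0)$ exists and is continuous with modulus $C\tilde\omega(2d)$.
\end{itemize}
The classical Campanato-type characterization then shows that $u$ has these derivatives in the usual sense, since the lower-order coefficients are consistent (each is the derivative of the next-lower one).

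\textbf{Main obstacle.} The main difficulty is the Dini case (2). There one must check that the pointwise bound from Theorem~\ref{t1}(2) gives a genuine modulus of continuity for $D^{m+2}u$, uniformly in the base point. The key fact is $\rho^\mu\int_\rho^r\omega(t)t^{-1-\mu}dt\to0$, which I would prove by splitting the integral at $\sqrt\rho$. A second point is the step that upgrades pointwise polynomials to classical differentiability of order $m+2$; I would cite the standard argument (e.g.\ as in \cite{LW}) rather than redo it.
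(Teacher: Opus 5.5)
Your proposal is correct and follows the deduction the paper intends; the paper states the corollary as an immediate consequence of Theorem~\ref{t1} without writing it out, and your steps are the standard way to carry it out: apply Theorem~\ref{t1} on $B_{r/2}(x_0)$ for every $x_0\in B_{r/2}$ with data bounded uniformly in $x_0$, compare $P_{x_0}$ with $P_{y_0}$ on $B_d(x_0)$, and conclude with the converse-Taylor (Campanato) characterization. The one thing to tidy up is Step~3: you cannot speak of Taylor coefficients of $u$ of order greater than two before differentiability is established, so the identification $D^\beta u(x_0)=D^\beta P_{x_0}(x_0)$ must be the output of the compatibility estimates in Step~4, not an input; in cases (2) and (3) this uses the $o(\rho^{m+2})$ bound rather than $o(\rho^{m+1})$.
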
 

In this paper, the basic idea of using the Campanato iteration is as follows: First, we approximate solutions to \eqref{7} using solutions to the Dirichlet problem \eqref{eq. Dirichlet problem}, which involves a purely local equation. Then, we use $(m+2)$-order polynomials to approximate solutions to \eqref{eq. Dirichlet problem}.

We would like to emphasize that we can obtain arbitrarily high-order regularity for $(-\Delta)^{s}\widetilde{Q}$, where $\widetilde{Q}$ is a polynomial near the origin (see Lemmas~\ref{lem. very general (-Delta)^s osc estimate} and~\ref{lem. pointwise s effect on smooth function}). Compared to \cite{SVWZ}, our computation is simpler, and the resulting estimate is significantly stronger. The key idea relies on the important observation in \cite{DSV} that every function is locally $s$-harmonic up to a small error of arbitrarily high order.

This paper is organized as follows. Section 2 collects preliminary facts, including estimates of the fractional Laplacian of cut-off polynomials  and the construction of the barrier function. Section 3 develops the core Campanato iteration: we define the sequence of approximating polynomials $P_k$, the auxiliary Dirichlet problems, and the key quantities $\varepsilon_k$ and $ \mathcal{O}_k$. Lemmas 3.1-3.5 provide the essential estimates needed for the recursion, culminating in Lemma 3.6. Finally, we provide a proof of Theorem \ref{t1}.

\section{Preliminaries}
In this section, we prove some preliminary facts.
\subsection{Fractional Laplacian of quadratic polynomials}
First, we estimate the effect of applying $(-\Delta)^{s}$ to a smooth function.
\begin{lemma}\label{lem. very general (-Delta)^s osc estimate}
    Let $Q_{1}$ and $Q_{2}$ be two $(m+2)$-order polynomials for $m\geq0$. Let $\Omega$ be a bounded region satisfying $B_{R}\subseteq\Omega$ for some $R>0$. Let
    \begin{equation*}
        \widetilde{Q}_{1}(x)=Q_{1}(x)\cdot\chi_{B_{R}}+v(x)\cdot\chi_{B_{R}^{c}},\quad\widetilde{Q}_{2}(x)=Q_{2}(x)\cdot\chi_{\Omega}+v(x)\cdot\chi_{\Omega^{c}},
    \end{equation*}
    where $v\in\mathcal{L}_{2s}$ is an arbitrary function. Let us denote
    \begin{equation*}
        Q(x)=Q_{1}(x)-Q_{2}(x),\quad\widetilde{Q}(x)=\widetilde{Q}_{1}(x)-\widetilde{Q}_{2}(x).
    \end{equation*}
    Then, there exists a uniform constant $C=C(n,s,m)$, such that
    \begin{equation*}
        R^{l}\Big\|D^{l}(-\Delta)^{s}\widetilde{Q}\Big\|_{L^{\infty}(B_{R/2})}\leq\frac{C}{R^{2s}}\cdot\Big(\|Q\|_{L^{\infty}(B_{R})}+\|v-Q_{2}\|_{L^{\infty}(\Omega)}\Big),\quad\mbox{for all }0\leq l\leq m+1.
    \end{equation*}
\end{lemma}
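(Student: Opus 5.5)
The plan is to first decompose $\widetilde{Q}$ according to where it lives. Since $\widetilde{Q}_1$ and $\widetilde{Q}_2$ coincide with $v$ outside $\Omega$, a direct check gives
\begin{equation*}
\widetilde{Q}=Q\cdot\chi_{B_R}+(v-Q_2)\cdot\chi_{\Omega\setminus B_R}=:g+h.
\end{equation*}
Indeed, in $B_R$ we get $Q_1-Q_2=Q$; in $\Omega\setminus B_R$ we get $v-Q_2$; and outside $\Omega$ we get $v-v=0$. By linearity it suffices to bound $R^l\|D^l(-\Delta)^s g\|_{L^\infty(B_{R/2})}$ by $CR^{-2s}\|Q\|_{L^\infty(B_R)}$, and $R^l\|D^l(-\Delta)^s h\|_{L^\infty(B_{R/2})}$ by $CR^{-2s}\|v-Q_2\|_{L^\infty(\Omega)}$.

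\textbf{The term $h$.} The function $h$ is bounded and vanishes on $B_R$. Hence for $x\in B_{R/2}$ there is no singularity, and
\begin{equation*}
(-\Delta)^s h(x)=-C_{n,s}\int_{\Omega\setminus B_R}\frac{h(y)}{|x-y|^{n+2s}}\,dy.
\end{equation*}
We may differentiate under the integral sign, since $|x-y|\geq |y|/2\geq R/2$ there. Each $x$-derivative of the kernel costs a factor $|x-y|^{-1}\lesssim |y|^{-1}$. This gives
\begin{equation*}
|D^l(-\Delta)^s h(x)|\leq C\|h\|_{L^\infty}\int_{B_R^c}|y|^{-n-2s-l}\,dy\leq C R^{-2s-l}\|v-Q_2\|_{L^\infty(\Omega)},
\end{equation*}
which is exactly the desired bound for $h$.

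\textbf{The term $g$.} By scaling, set $g_R(x)=g(Rx)=Q(Rx)\chi_{B_1}(x)$. Then
\begin{equation*}
D^l(-\Delta)^s g(x)=R^{-2s-l}\,\big(D^l(-\Delta)^s g_R\big)(x/R),
\end{equation*}
and $\|Q(R\cdot)\|_{L^\infty(B_1)}=\|Q\|_{L^\infty(B_R)}$. So it suffices to treat $R=1$, with $\widehat{Q}$ an arbitrary polynomial of degree at most $m+2$.

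For $R=1$, fix a cutoff $\eta\in C_c^\infty(B_1)$ with $\eta\equiv1$ on $B_{3/4}$, and split
\begin{equation*}
\widehat{Q}\chi_{B_1}=\widehat{Q}\eta+\widehat{Q}(\chi_{B_1}-\eta).
\end{equation*}
The second piece is bounded by $C\|\widehat{Q}\|_{L^\infty(B_1)}$ and vanishes on $B_{3/4}$. It is therefore handled exactly like $h$, now with distance at least $1/4$ from $B_{1/2}$.

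For the first piece, $(-\Delta)^s$ commutes with derivatives on $C_c^\infty$. Using the standard bound $|(-\Delta)^s\phi(x)|\leq C\big(\|D^2\phi\|_{L^\infty}+\|\phi\|_{L^\infty}\big)$, obtained by splitting the integral into $|y|<1$ and $|y|\geq 1$ after symmetrization, we get
\begin{equation*}
\|D^l(-\Delta)^s(\widehat{Q}\eta)\|_{L^\infty}\leq C\|\widehat{Q}\eta\|_{C^{l+2}}.
\end{equation*}
Since $l\leq m+1$, this is at most $C\|\widehat{Q}\eta\|_{C^{m+3}}$.

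\textbf{Main obstacle.} The step needing care is making the constant uniform, i.e.\ controlling $\|\widehat{Q}\eta\|_{C^{m+3}}$ by $\|\widehat{Q}\|_{L^\infty(B_1)}$ alone. This follows because polynomials of degree $\leq m+2$ form a finite-dimensional space. On that space, $\|\cdot\|_{L^\infty(B_1)}$ is a norm, and $\|\cdot\,\eta\|_{C^{m+3}(B_1)}$ is another seminorm. Equivalence of norms then gives a constant $C=C(n,m)$ with
\begin{equation*}
\|\widehat{Q}\eta\|_{C^{m+3}}\leq C\|\widehat{Q}\|_{L^\infty(B_1)}.
\end{equation*}
Combining the three estimates and undoing the scaling yields the lemma, with $C=C(n,s,m)$.
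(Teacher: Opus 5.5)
Your argument is correct, and it follows a genuinely different route from the paper.

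The paper proves this lemma only through the pointwise Lemma~\ref{lem. pointwise s effect on smooth function}. There it uses Lemma~\ref{lem. all functional are s-harmonic} (the Dipierro--Savin--Valdinoci result) to subtract from $Q\chi_{B_R}$ a function $H$ that is $s$-harmonic in $B_R$ and matches $Q$ to order $m+3$ at the origin. This gives $(-\Delta)^s(Q\chi_{B_R})=(-\Delta)^s\mathcal{E}$ in $B_R$ with $\mathcal{E}=O(|x|^{m+4})$. The terms where the kernel stays away from the diagonal ($J_1,\dots,J_4,J_{51},J_{52}$) are differentiated under the integral sign. The near-diagonal term $J_{53}$ is bounded only in $L^\infty(B_\delta)$, by $C\delta^{m+4-2s}R^{-m-4}\|Q\|_{L^\infty(B_R)}$, and a Taylor polynomial finishes the proof.

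Your $h$-term is their $J_2$. For the interior piece, however, you replace the $s$-harmonic approximation by four elementary ingredients:
\begin{itemize}
\item a smooth cutoff;
\item the identity $D^l(-\Delta)^s\phi=(-\Delta)^sD^l\phi$ for $\phi\in C_c^\infty$;
\item rescaling to $R=1$;
\item norm equivalence on polynomials of degree at most $m+2$.
\end{itemize}

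This route is more elementary and also gives more. Since $\|\widehat Q\eta\|_{C^{l+2}}\le C(n,m,l)\|\widehat Q\|_{L^\infty(B_1)}$ for every $l$, you control $D^l(-\Delta)^s\widetilde Q$ for all orders $l$, uniformly on the whole ball $B_{R/2}$. Taking $l=m+2$ and Taylor expanding on $B_\delta\subset B_{R/2}$ then recovers Lemma~\ref{lem. pointwise s effect on smooth function} immediately, even with a polynomial independent of $\delta$.

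The paper's reduction goes the other way: from an estimate at the single point $0$ to the full ball $B_{R/2}$. That step tacitly requires re-centering the pointwise lemma at every $x_0\in B_{R/2}$, rewriting $\widetilde Q_1$ relative to a smaller ball around $x_0$ and adjusting $v$ there. The paper leaves this implicit in ``it suffices,'' and your argument does not need it.

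What the paper's route offers is mainly the conceptual link with local $s$-harmonic approximation that the authors emphasize. For this particular lemma, your cutoff argument shows that machinery is not needed.
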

\begin{proof}
    It suffices to prove Lemma~\ref{lem. pointwise s effect on smooth function} below, which is a pointwise version of Lemma~\ref{lem. very general (-Delta)^s osc estimate}.
\end{proof}
\begin{lemma}\label{lem. pointwise s effect on smooth function}
    Under the assumptions of Lemma~\ref{lem. very general (-Delta)^s osc estimate}, for all $\delta\leq\frac{R}{4}$, there exists a $(m+1)$-order polynomial $P_{\delta}(x)$ satisfying
    \begin{equation*}
        \|P_{\delta}\|_{L^{\infty}(B_{R})}+(\frac{R}{\delta})^{m+2}\|(-\Delta)^{s}\widetilde{Q}-P_{\delta}\|_{L^{\infty}(B_{\delta})}\leq\frac{C(n,s,m)}{R^{2s}}\cdot\Big(\|Q\|_{L^{\infty}(B_{R})}+\|v-Q_{2}\|_{L^{\infty}(\Omega)}\Big).
    \end{equation*}
\end{lemma}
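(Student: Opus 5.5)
By scaling $x\mapsto Rx$, I will reduce to $R=1$. The factor $R^{-2s}$ comes from homogeneity of $(-\Delta)^s$, and the weights $(R/\delta)^{m+2}$ and $\|P_\delta\|_{L^\infty(B_R)}$ are scale invariant. Write $M:=\|Q\|_{L^\infty(B_1)}+\|v-Q_2\|_{L^\infty(\Omega)}$.

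The first step is to split $\widetilde Q$. On $B_1$ we have $\widetilde Q=Q$. On $\Omega\setminus B_1$ we have $\widetilde Q=v-Q_2$. Outside $\Omega$, $\widetilde Q=v-v=0$. Hence
\[
\widetilde Q = Q\,\chi_{B_1} + (v-Q_2)\chi_{\Omega\setminus B_1} =: g_1+g_2 .
\]
Both pieces are bounded by $M$ and $g_2$ has compact support. For $g_2$, which vanishes in $B_1$, the following holds for $x\in B_{1/2}$:
\[
(-\Delta)^s g_2(x)=-C_{n,s}\int_{\Omega\setminus B_1}\frac{g_2(y)}{|x-y|^{n+2s}}dy.
\]
This is real-analytic in $x$ on $B_{1/2}$. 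Differentiating under the integral gives $|D^l(-\Delta)^sg_2|\le C M$ on $B_{1/2}$ for all $l\le m+2$, since $|x-y|\ge 1/2$ and $\int_{B_1^c}|x-y|^{-n-2s-l}dy\le C$. I then take the degree-$(m+1)$ Taylor polynomial at $0$. Its coefficients are $\le CM$, so its sup on $B_1$ is $\le CM$, and the remainder on $B_\delta$ is $\le CM\delta^{m+2}$.

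The main piece is $g_1=Q\chi_{B_1}$, where $Q$ is a polynomial of degree $m+2$ with $\|Q\|_{L^\infty(B_1)}\le M$. By equivalence of norms on the finite-dimensional space of polynomials, its coefficients are $\le CM$. Here I will use the observation from \cite{DSV}: for any smooth $\phi$ and any $k$ there is a function $\psi$ with $(-\Delta)^s\psi=0$ in $B_{1/2}$ and $\|\psi-\phi\|_{C^k(B_{1/2})}$ arbitrarily small. For a fixed finite-dimensional space this can be done linearly and boundedly. More directly, I will avoid \cite{DSV} and work with the monomial basis $x^\beta$, $|\beta|\le m+2$. For each monomial, write $x^\beta\chi_{B_1}=x^\beta\eta+x^\beta(\chi_{B_1}-\eta)$, with $\eta\in C_c^\infty(B_1)$ a cutoff equal to $1$ on $B_{3/4}$.

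For the first term, $(-\Delta)^s(x^\beta\eta)$ is a fixed $C^\infty$ function, because $x^\beta\eta\in C_c^\infty$. For the second term, the function vanishes on $B_{3/4}$, so its fractional Laplacian is smooth on $B_{1/2}$ by the kernel argument above. Hence $h_\beta:=(-\Delta)^s(x^\beta\chi_{B_1})$ is a fixed function that is $C^{m+2}$ on $B_{1/2}$, with norm bounded by $C(n,s,m)$. By linearity, $(-\Delta)^s g_1=\sum_\beta q_\beta h_\beta$ with $|q_\beta|\le CM$. Its Taylor polynomial of degree $m+1$ at $0$ then satisfies the same bounds as before.

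Adding the two Taylor polynomials gives $P_\delta$, and in fact a polynomial independent of $\delta$. The estimate $\|(-\Delta)^s\widetilde Q-P_\delta\|_{L^\infty(B_\delta)}\le CM\delta^{m+2}$ holds for $\delta\le1/4$, and rescaling gives the claim. The only delicate point is justifying that $(-\Delta)^s$ of a function that is $C^\infty$ near $B_{1/2}$ but merely bounded and compactly supported elsewhere is $C^{m+2}$ on $B_{1/2}$ with a uniform bound. This follows from the decomposition above: a smooth compactly supported part, for which $(-\Delta)^s$ commutes with derivatives, plus a part vanishing near $B_{1/2}$, handled by differentiating the kernel. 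Lemma~\ref{lem. very general (-Delta)^s osc estimate} follows by the same bounds on derivatives.
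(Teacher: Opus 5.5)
Your proof is correct, and it takes a genuinely different and more elementary route than the paper.

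The paper invokes the density theorem of \cite{DSV} (Lemma~\ref{lem. all functional are s-harmonic} with $M=m+3$) to subtract from $Q\chi_{B_R}$ a function $H$ that is $s$-harmonic in $B_R$. This gives $(-\Delta)^s(Q\chi_{B_R})=(-\Delta)^s\mathcal{E}$ in $B_R$ with $\mathcal{E}=O(|x|^{m+4})$. The paper then splits $(-\Delta)^s\widetilde Q$ into two kinds of terms. The kernel terms $J_1,\dots,J_4,J_{51},J_{52}$ carry $C^{m+2}$ bounds. The near-field term $J_{53}$ on $B_{2\delta}$ is bounded only in $L^\infty$, by $C\delta^{m+4-2s}R^{-m-4}\|Q\|_{L^\infty(B_R)}$, using the vanishing of $\mathcal{E}$. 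The paper takes $P_\delta$ to be the Taylor polynomial of $(-\Delta)^s\widetilde Q-J_{53}$, which is why it depends on $\delta$.

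You instead split each $x^{\vec\beta}\chi_{B_1}$ with a fixed cutoff $\eta$. The $C_c^\infty$ piece is one on which $(-\Delta)^s$ commutes with derivatives. The piece vanishing on $B_{3/4}$ is handled by differentiating the kernel. Uniformity of constants comes from scaling and the finitely many monomials. This shows directly that
$R^{l}\|D^{l}(-\Delta)^s\widetilde Q\|_{L^\infty(B_{R/2})}\le CR^{-2s}\big(\|Q\|_{L^\infty(B_R)}+\|v-Q_2\|_{L^\infty(\Omega)}\big)$ for all $0\le l\le m+2$.

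This buys you two things:
\begin{itemize}
\item a single $\delta$-independent Taylor polynomial works, which is stronger than the statement;
\item Lemma~\ref{lem. very general (-Delta)^s osc estimate} follows at once on all of $B_{R/2}$. The paper's proof only produces the expansion at the origin, so it needs an unstated recentering to recover that lemma.
\end{itemize}

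The paper's route gives nothing extra for this lemma. Its role is mainly to showcase the ``every function is locally $s$-harmonic'' mechanism advertised in the introduction. Its constants are no more explicit than yours: they come from $\Lambda_0,C_0$ in \cite{DSV}, yours from the fixed functions $h_{\vec\beta}$.

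Your aside about realizing the \cite{DSV} approximation linearly and boundedly is never used and can simply be dropped.
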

Its proof relies on an important fact in \cite{DSV}: all functions are almost $s$-harmonic, as stated below.
\begin{lemma}\label{lem. all functional are s-harmonic}
    Let $R>0$ be arbitrary. Let $M\in\mathbb{N}$, and let $I(M)$ be the index set defined as follows:
    \begin{equation*}
        I(M):=\Big\{\vec{\beta}=(\beta_{1},\cdots,\beta_{n}):\ \beta_{i}\in\mathbb{Z}_{\geq0}\mbox{ and }\sum_{i=1}^{n}\beta_{i}\leq M\Big\}.
    \end{equation*}
    Then, there exist sufficiently large constants $\Lambda_{0}=\Lambda_{0}(s,M)$ and $C_{0}(s,M)$ such that, for every $\vec{\beta}\in I(M)$, there exists a function
    \begin{equation*}
        H_{\vec{\beta}}(x)\in C^{M+1}(B_{R})\cap C_{c}(B_{\Lambda_{0}R})\quad\mbox{satisfying }\|H_{\vec{\beta}}\|_{L^{\infty}(\mathbb{R}^{n})}\leq C_{0}.
    \end{equation*}
Moreover, $ H_{\vec{\beta}}$ is s-harmonic in $B_{R}$, namely,
\[
(-\Delta)^{s}H_{\vec{\beta}}(x)=0
\qquad \text{in } B_{R},
\]
and admits the following asymptotic expansion in $B_{R}$: 
    \begin{equation*}
        H_{\vec{\beta}}(x)=\prod_{i=1}^{n}(\frac{x_{i}}{R})^{\beta_{i}}+O(|x|^{M+1}),\quad\mbox{and }\,\|D^{M+1}H_{\vec{\beta}}\|_{L^{\infty}(B_{R})}\leq\frac{C_{0}}{R^{M+1}}.
    \end{equation*}
\end{lemma}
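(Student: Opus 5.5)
The plan is to reduce to $R=1$ by scaling, build finitely many fixed $s$-harmonic functions whose Taylor jets at the origin span all jets of order $\le M$, and take linear combinations to hit each monomial exactly.

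\emph{Scaling.} Since $(-\Delta)^{s}$ commutes with dilations up to the factor $R^{-2s}$, it suffices to construct, for $R=1$, functions $h_{\vec\beta}\in C^{\infty}(B_{1})\cap C_{c}(B_{\Lambda_{0}})$ that are $s$-harmonic in $B_{1}$ and satisfy $h_{\vec\beta}(x)=x^{\vec\beta}+O(|x|^{M+1})$. We then set $H_{\vec\beta}(x):=h_{\vec\beta}(x/R)$. This is $s$-harmonic in $B_{R}$, has the same sup norm, and is supported in $B_{\Lambda_{0}R}$. Its expansion is $\prod_{i}(x_{i}/R)^{\beta_{i}}+O(|x|^{M+1})$, and $\|D^{M+1}H_{\vec\beta}\|_{L^{\infty}(B_{R})}=R^{-(M+1)}\|D^{M+1}h_{\vec\beta}\|_{L^{\infty}(B_{1})}$. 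Since $I(M)$ is finite and the $h_{\vec\beta}$ are fixed once and for all, the constants $\Lambda_{0},C_{0}$ depend only on $s$ and $M$. To get the $C^{M+1}$ bound on $B_{1}$, I will in fact make each $h_{\vec\beta}$ $s$-harmonic in $B_{2}$ and use interior smoothness of $s$-harmonic functions, with estimates in terms of $\|h\|_{L^{\infty}}$.

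\emph{Linear algebra reduction.} Let $N=\#I(M)$ and let $\mathcal{H}$ be the space of bounded, compactly supported functions that are $s$-harmonic in $B_{2}$ (hence smooth in $B_{1}$). Consider the linear map
\[
J:\mathcal{H}\to\mathbb{R}^{N},\qquad J(w)=\big(D^{\vec\beta}w(0)\big)_{\vec\beta\in I(M)}.
\]
If $J$ is onto, we choose $w_{1},\dots,w_{N}\in\mathcal{H}$ with $J(w_{j})$ a basis of $\mathbb{R}^{N}$. Each $h_{\vec\beta}$ is then the linear combination whose jet equals that of $x^{\vec\beta}$, and Taylor's theorem with the $C^{M+1}$ bound gives the $O(|x|^{M+1})$ remainder. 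So everything reduces to surjectivity of $J$.

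\emph{Surjectivity (main obstacle).} Suppose $J(\mathcal{H})$ is a proper subspace. Then there is a nonzero $c=(c_{\vec\beta})$ with $\sum_{\vec\beta}c_{\vec\beta}D^{\vec\beta}w(0)=0$ for all $w\in\mathcal{H}$, i.e. a nonzero constant-coefficient operator $L=\sum_{\vec\beta} c_{\vec\beta}\partial^{\vec\beta}$ of order $\le M$ annihilating every jet.
\begin{itemize}
\item Following \cite{DSV}, I would use functions with the boundary profile $d^{s}$. Take $\psi$ $s$-harmonic in a large ball $B_{\rho}(x_{0})$, with nonzero exterior datum, behaving like $\kappa\,\mathrm{dist}(x,\partial B_{\rho}(x_{0}))^{s}$ near the boundary.
\item Place $0$ at distance $\varepsilon$ inside the boundary, with inner normal $e$, and rescale by $\varepsilon$ (dilations preserve $s$-harmonicity). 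As $\varepsilon\to0$, suitable rescalings of derivatives at $0$ converge, via the boundary expansion, to derivatives of $(1+e\cdot x)_{+}^{s}$ at $x=0$.
\item Applying $L$ then gives $\sum_{j}\sum_{|\vec\beta|=j}c_{\vec\beta}\,s(s-1)\cdots(s-j+1)\,e^{\vec\beta}=0$ for all unit $e$. Introducing a second scaling parameter $t$ in $(t+e\cdot x)^{s}$ separates the homogeneous degrees $j$.
\item Since $s\notin\mathbb{Z}$, the factors $s(s-1)\cdots(s-j+1)$ are nonzero. Hence each homogeneous polynomial $\sum_{|\vec\beta|=j}c_{\vec\beta}e^{\vec\beta}$ vanishes on the sphere, hence identically, so $c=0$, a contradiction.
\end{itemize}
The delicate part is justifying the limit of the derivatives. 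This needs the precise higher-order boundary expansion $\psi=\kappa d^{s}(1+O(d))$ with derivative control, which I would take from the explicit Poisson kernel / Green function of the ball for $(-\Delta)^{s}$. Finally, the functions produced this way are not compactly supported in $\mathbb{R}^{n}$ in general. I would fix this by choosing compactly supported exterior data, so that the $s$-harmonic function in the ball is itself compactly supported.
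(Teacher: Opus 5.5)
Your proposal is correct and takes the same route as the paper. The paper's proof simply rescales \cite[Theorem 3.1]{DSV}, and your jet-surjectivity argument is the mechanism behind that cited theorem: it uses the boundary profile $d^{s}$, made rigorous by the ball's Poisson kernel, which gives $\psi=(1-|x|^{2})_{+}^{s}\Phi$ with $\Phi$ smooth up to $\partial B_{1}$ when the exterior datum vanishes near the sphere. One bookkeeping point: since you require $s$-harmonicity in $B_{2}$, the base point must sit at distance $t\varepsilon$ with $t>2$ from the sphere before rescaling, and your second parameter $t$ already allows for this.
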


\begin{proof}
   This lemma follows from \cite[Theorem 3.1]{DSV} through the scaling technique.
\end{proof}
Now, we are able to prove Lemma~\ref{lem. pointwise s effect on smooth function}.
\begin{proof}[Proof of Lemma~\ref{lem. pointwise s effect on smooth function}]
    We have the following decomposition of $Q(x)$:
    \begin{equation*}
        Q(x)=\sum_{\vec{\beta}\in I(m+2)}a_{\vec{\beta}}\cdot\prod_{i=1}^{n}(\frac{x_{i}}{R})^{\beta_{i}},\quad\mbox{where }|a_{\vec{\beta}}|\leq C\|Q\|_{L^{\infty}(B_{R})}.
    \end{equation*}
    Using the functions $H_{\vec{\beta}}(x)$ in Lemma~\ref{lem. all functional are s-harmonic} with $M=m+3$, we set
    \begin{equation*}
        H(x)=\sum_{\vec{\beta}\in I(m+2)}a_{\vec{\beta}}\cdot H_{\vec{\beta}}(x),\quad\mathcal{E}(x)=Q(x)\cdot\chi_{B_{R}}-H(x).
    \end{equation*}
    Then, we must have $(-\Delta)^{s}\Big(\widetilde{Q}(x)\cdot\chi_{B_{R}}\Big)\equiv(-\Delta)^{s}\mathcal{E}(x)$ in $B_{R}$. Besides,
    \begin{equation*}
        \|\mathcal{E}\|_{L^{\infty}(\mathbb{R}^{n})}\leq C\|Q\|_{L^{\infty}(B_{R})},\quad\mbox{with }supp(\mathcal{E})\subseteq B_{\Lambda_{0}R}.
    \end{equation*}
    Near the origin, we have
    \begin{equation}\label{eq. error of s-harmonic near the origin}
        \mathcal{E}(x)=O(|x|^{m+4})\mbox{ with }\|D^{m+4}\mathcal{E}\|_{L^{\infty}(B_{R})}\leq\frac{C\|Q\|_{L^{\infty}(B_{R})}}{R^{m+4}}.
    \end{equation}

    By observing that
    \begin{equation*}
        \widetilde{Q}(x)=\left\{\begin{aligned}
            &Q,&\mbox{if }&x\in B_{R},\\
            &v-Q_{2},&\mbox{if }&x\in\Omega\setminus B_{R},\\
            &0,&\mbox{if }&x\in\Omega^{c},
        \end{aligned}\right.
    \end{equation*}
    we make the following decomposition for $x\in B_{R/2}$:
    \begin{align*}
        (-\Delta)^{s}\widetilde{Q}(x)=&C_{n,s}\int_{\Omega\setminus B_{R}}\frac{Q(x)-(v-Q_{2})(y)}{|x-y|^{n+2s}}dy+(-\Delta)^{s}(Q\cdot\chi_{B_{R}})(x)\\
        =&C_{n,s}\int_{\Omega\setminus B_{R}}\frac{Q(x)}{|x-y|^{n+2s}}dy-C_{n,s}\int_{\Omega\setminus B_{R}}\frac{(v-Q_{2})(y)}{|x-y|^{n+2s}}dy+(-\Delta)^{s}\mathcal{E}(x)\\
        =&C_{n,s}\int_{\Omega\setminus B_{R}}\frac{Q(x)}{|x-y|^{n+2s}}dy-C_{n,s}\int_{\Omega\setminus B_{R}}\frac{(v-Q_{2})(y)}{|x-y|^{n+2s}}dy\\
        &+C_{n,s}\int_{B_{\Lambda_{0}R}\setminus B_{R}}\frac{\mathcal{E}(x)}{|x-y|^{n+2s}}dy-C_{n,s}\int_{B_{\Lambda_{0}R}\setminus B_{R}}\frac{\mathcal{E}(y)}{|x-y|^{n+2s}}dy\\
        &+C_{n,s}P.V.\int_{B_{R}}\frac{\mathcal{E}(x)-\mathcal{E}(y)}{|x-y|^{n+2s}}dy\\
        =:&J_{1}(x)-J_{2}(x)+J_{3}(x)-J_{4}(x)+J_{5}(x).
    \end{align*}
    
    \textbf{Regularity of $J_{1}(x)$.} Notice that when $(x,y)\in B_{R/2}\times (B_{R}^c)$, the kernel $\frac{1}{|x-y|^{n+2s}}$ is analytic both in $x$ and in $y$. Precisely, we have
    \begin{equation*}
        \Big|D_{x}^{l}\frac{1}{|x-y|^{n+2s}}\Big|\leq\frac{C(n,s,m)}{|x-y|^{n+2s+l}},\quad\mbox{for all }0\leq l\leq m+2.
    \end{equation*}
    Using the inequality
    \begin{equation*}
        R^{l}\|D^{l}Q\|_{L^{\infty}(B_{R})}\leq C(n,l)\|Q\|_{L^{\infty}(B_{R})},
    \end{equation*}
    we then infer from the chain rule that
    \begin{equation*}
        \Big|D_{x}^{l}\frac{Q(x)}{|x-y|^{n+2s}}\Big|\leq C(n,s,m)\cdot\sum_{i=0}^{l}\frac{R^{i-l}\|Q\|_{L^{\infty}(B_{R})}}{|x-y|^{n+2s+i}},\quad\mbox{for all }0\leq l\leq m+2.
    \end{equation*}
    Therefore, by integration (which commutes with differentiation), we have
    \begin{equation*}
        |D^{l}J_{1}(x)|\leq C\cdot\sum_{i=0}^{l}\int_{\Omega\setminus B_{R}}\frac{R^{i-l}\|Q\|_{L^{\infty}(B_{R})}}{|x-y|^{n+2s+i}}dy\leq\frac{C(n,s,m)}{R^{l+2s}}\cdot\|Q\|_{L^{\infty}(B_{R})},\quad\mbox{for all }0\leq l\leq m+2.
    \end{equation*}

    \textbf{Regularity of $J_{2}(x)$.} Similarly, by observing that
    \begin{equation*}
        \Big|D_{x}^{l}\frac{(v-Q_{2})(y)}{|x-y|^{n+2s}}\Big|\leq\frac{C(n,s,m)\cdot\|v-Q_{2}\|_{L^{\infty}(\Omega)}}{|x-y|^{n+2s+l}},\quad\mbox{for all }0\leq l\leq m+2,
    \end{equation*}
    we obtain from integration that
    \begin{equation*}
        |D^{l}J_{2}(x)|\leq C\cdot\int_{\Omega\setminus B_{R}}\frac{\|v-Q_{2}\|_{L^{\infty}(\Omega)}}{|x-y|^{n+2s+l}}dy\leq\frac{C(n,s,m)}{R^{l+2s}}\cdot\|v-Q_{2}\|_{L^{\infty}(\Omega)},\quad\mbox{for all }0\leq l\leq m+2.
    \end{equation*}

    \textbf{Regularity of $J_{3}(x)$.} Similar to the analysis of $J_{1}(x)$, we have
    \begin{equation*}
        |D^{l}J_{3}(x)|\leq C\cdot\sum_{i=0}^{l}\int_{B_{\Lambda_{0}R}\setminus B_{R}}\frac{R^{i-l}\|\mathcal{E}\|_{L^{\infty}(B_{R})}}{|x-y|^{n+2s+i}}dy\leq\frac{C(n,s,m)}{R^{l+2s}}\cdot\|Q\|_{L^{\infty}(B_{R})},\quad\mbox{for all }0\leq l\leq m+2.
    \end{equation*}

    \textbf{Regularity of $J_{4}(x)$.} Similar to the analysis of $J_{2}(x)$, we have
    \begin{equation*}
        |D^{l}J_{4}(x)|\leq C\cdot\int_{B_{\Lambda_{0}R}\setminus B_{R}}\frac{\|\mathcal{E}\|_{L^{\infty}(B_{R})}}{|x-y|^{n+2s+l}}dy\leq\frac{C(n,s,m)}{R^{l+2s}}\cdot\|Q\|_{L^{\infty}(B_{R})},\quad\mbox{for all }0\leq l\leq m+2.
    \end{equation*}

    \textbf{Further decompose $J_{5}(x)$.} Let $x\in B_{\delta}$ for $\delta\leq\frac{R}{4}$. We further decompose $J_{5}(x)$ into:
    \begin{align*}
        J_{5}(x)=&C_{n,s}\int_{B_{R}\setminus B_{2\delta}}\frac{\mathcal{E}(x)}{|x-y|^{n+2s}}dy-C_{n,s}\int_{B_{R}\setminus B_{2\delta}}\frac{\mathcal{E}(y)}{|x-y|^{n+2s}}dy+C_{n,s}P.V.\int_{B_{2\delta}}\frac{\mathcal{E}(x)-\mathcal{E}(y)}{|x-y|^{n+2s}}dy\\
        =:&J_{51}(x)-J_{52}(x)+J_{53}(x)
    \end{align*}

    \textbf{Regularity of $J_{51}(x)$.} By \eqref{eq. error of s-harmonic near the origin}, we see that for $x\in B_{\delta}$,
    \begin{equation*}
        |D^{l}\mathcal{E}(x)|\leq C(n,s,m)\frac{\delta^{m+4-l}}{R^{m+4}}\cdot\|Q\|_{L^{\infty}(B_{R})},\quad\mbox{for all }0\leq l\leq m+4.
    \end{equation*}
    Then, similar to the analysis of $J_{1}(x)$, we have
    \begin{align*}
        |D_{x}^{l}J_{51}(x)|\leq&C\frac{\|Q\|_{L^{\infty}(B_{R})}}{R^{m+4}}\cdot\sum_{i=0}^{l}\int_{B_{R}\setminus B_{2\delta}}\frac{\delta^{m+4+i-l}}{|x-y|^{n+2s+i}}dy\leq C\frac{\delta^{m+4-l-2s}}{R^{m+4}}\|Q\|_{L^{\infty}(B_{R})}\\
        \leq&\frac{C(n,s,m)}{R^{l+2s}}\|Q\|_{L^{\infty}(B_{R})},\quad\mbox{for all }0\leq l\leq m+2.
    \end{align*}

    \textbf{Regularity of $J_{52}(x)$.} By \eqref{eq. error of s-harmonic near the origin}, we see that for   $x\in B_\delta$ and $y\in B_{R}\backslash B_{2\delta}$ (with $\delta \leq \frac{R}{4}$):
    \begin{equation*}
        |\mathcal{E}(y)|\leq C(n,s,m)\frac{|y|^{m+4}}{R^{m+4}}\cdot\|Q\|_{L^{\infty}(B_{R})}\leq C(n,s,m)\frac{|x-y|^{m+4}}{R^{m+4}}\cdot\|Q\|_{L^{\infty}(B_{R})}.
    \end{equation*}
    Then, similar to the analysis of $J_{2}(x)$, we have (by noticing $m+4-l\geq2$) that:
    \begin{align*}
        |D_{x}^{l}J_{52}(x)|\leq&C\frac{\|Q\|_{L^{\infty}(B_{R})}}{R^{m+4}}\cdot\int_{B_{R}\setminus B_{2\delta}}\frac{|x-y|^{m+4}}{|x-y|^{n+2s+l}}dy\\
        \leq&\frac{C(n,s,m)}{R^{l+2s}}\|Q\|_{L^{\infty}(B_{R})},\quad\mbox{for all }0\leq l\leq m+2.
    \end{align*}

    \textbf{$L^{\infty}$ estimate of $J_{53}(x)$.} As $B_{\delta}(x)\subseteq B_{2\delta}$, we have 
    \begin{align*}
        |J_{53}(x)|\leq&C_{n,s}\Big|P.V.\int_{B_{\delta}(x)}\frac{\mathcal{E}(x)-\mathcal{E}(y)}{|x-y|^{n+2s}}dy\Big|+C_{n,s}\int_{B_{2\delta}\setminus B_{\delta}(x)}\frac{|\mathcal{E}(x)-\mathcal{E}(y)|}{|x-y|^{n+2s}}dy\\
        \leq&C\int_{B_{\delta}(x)}\frac{|D^{2}\mathcal{E}|_{L^{\infty}(B_{2\delta})}|x-y|^{2}}{|x-y|^{n+2s}}dy+C\int_{B_{2\delta}\setminus B_{\delta}(x)}\frac{|\nabla\mathcal{E}|_{L^{\infty}(B_{2\delta})}|x-y|}{|x-y|^{n+2s}}dy\\
        \leq&C\frac{\delta^{m+2}\|Q\|_{L^{\infty}(B_{R})}}{R^{m+4}}\int_{B_{\delta}(x)}\frac{|x-y|^{2}dy}{|x-y|^{n+2s}}+C\frac{\delta^{m+3}\|Q\|_{L^{\infty}(B_{R})}}{R^{m+4}}\int_{B_{2\delta}\setminus B_{\delta}(x)}\frac{|x-y|dy}{|x-y|^{n+2s}}\\
        \leq&C\frac{\delta^{m+4-2s}\|Q\|_{L^{\infty}(B_{R})}}{R^{m+4}}\leq C\|Q\|_{L^{\infty}(B_{R})}\frac{\delta^{m+2}}{R^{m+2+2s}}.
    \end{align*}

\textbf{Conclusion: construction of the polynomial $P_\delta$.}
From the estimates above, we know that for all $0\leq l\leq m+2$ and $x\in B_{\delta}$,
\begin{equation*}
    \Big|D^{l}\Big((-\Delta)^{s}\widetilde{Q}(x)-J_{53}(x)\Big)\Big|\leq\frac{C(n,s,m)}{R^{l+2s}}\Big(\|Q\|_{L^{\infty}(B_{R})}+\|v-Q_{2}\|_{L^{\infty}(\Omega)}\Big).
\end{equation*}

We now define $P_{\delta}$ as the Taylor polynomial of order $m+1$ of $(-\Delta)^{s}\widetilde{Q}-J_{53}$ at the origin. By Taylor's expansion, for all $x\in B_{\delta}$,
\begin{equation*}
    |(-\Delta)^{s}\widetilde{Q}(x)-P_{\delta}(x)|\leq C\sup_{|\alpha|=m+2}\Big\|D^{\alpha}\Big((-\Delta)^{s}\widetilde{Q}-J_{53}\Big)\Big\|_{L^{\infty}(B_{\delta})}|x|^{m+2}+|J_{53}(x)|.
\end{equation*}
Using the estimates above, we obtain
\begin{equation*}
    \|(-\Delta)^{s}\widetilde{Q}-P_{\delta}\|_{L^{\infty}(B_{\delta})}\leq C(n,s,m)\frac{\delta^{m+2}}{R^{m+2+2s}}\Big(\|Q\|_{L^{\infty}(B_{R})}+\|v-Q_{2}\|_{L^{\infty}(\Omega)}\Big).
\end{equation*}
On the other hand, from the bounds on the derivatives at the origin, we also have
\begin{equation*}
    \|P_{\delta}\|_{L^{\infty}(B_{R})}\leq\frac{C(n,s,m)}{R^{2s}}\Big(\|Q\|_{L^{\infty}(B_{R})}+\|v-Q_{2}\|_{L^{\infty}(\Omega)}\Big).
\end{equation*}
Combining the above estimates yields
\begin{equation*}
    \|P_{\delta}\|_{L^{\infty}(B_{R})}+\left(\frac{R}{\delta}\right)^{m+2}\|(-\Delta)^{s}\widetilde{Q}-P_{\delta}\|_{L^{\infty}(B_{\delta})}\leq\frac{C(n,s,m)}{R^{2s}}\Big(\|Q\|_{L^{\infty}(B_{R})}+\|v-Q_{2}\|_{L^{\infty}(\Omega)}\Big).
\end{equation*}
This completes the proof.
\end{proof}

\subsection{A barrier function}
Next, we  construct a useful barrier function.
\begin{lemma}\label{lem. barrier}
Let $s\in(0,1)$, $\sigma=\min\{s,1-s\}$, and let $R\leq1$. Then, there exist two constants $A,B>0$ depending only on $(n,s)$, such that the following function
\begin{equation*}
w(x) =R^{2}\cdot\Big\{A\cdot(1-\frac{|x|^{2}}{R^{2}})_+^\sigma+B\cdot(1-\frac{|x|^{2}}{R^{2}})_+^s\Big\}
\end{equation*}
satisfies
\begin{equation*}
    (-\Delta)^s w(x)-\Delta w(x)\geq R^{2s}\cdot\delta(x)^{-2s}\mbox{ in }B_{R},\quad\mbox{where }\delta=\delta(x):= R - |x|.
\end{equation*}
\end{lemma}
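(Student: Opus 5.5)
The plan is to reduce to $R=1$ by scaling, compute both operators on the two profiles, and then choose $B$ and $A$ in that order.

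\textbf{Scaling.} Set $w(x)=R^{2}W(x/R)$ with
\[
W(y)=A(1-|y|^2)_+^\sigma+B(1-|y|^2)_+^s .
\]
Then
\[
(-\Delta)^s w(x)=R^{2-2s}\big((-\Delta)^sW\big)(x/R),\qquad -\Delta w(x)=-\Delta W(x/R).
\]
Writing $\delta(x)=R\,d(y)$ with $d(y)=1-|y|$, the target quantity becomes $R^{2s}\delta^{-2s}=d(y)^{-2s}$. So it suffices to show, for $|y|<1$,
\[
R^{2-2s}(-\Delta)^sW(y)-\Delta W(y)\ge d(y)^{-2s}.
\]
Since $R\le 1$ and $2-2s>0$, the factor $R^{2-2s}$ can be small. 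The $s$-term therefore cannot be relied on when it is positive, but its negative part is also damped by $R^{2-2s}\le 1$. The plan is thus to make $-\Delta W\ge d^{-2s}$ while keeping $(-\Delta)^sW\ge 0$, or at least bounded below.

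\textbf{The two profiles.}

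First, the classical fact (Dyda/Getoor) is that $(-\Delta)^s(1-|y|^2)_+^s=c_{n,s}>0$ is constant in $B_1$. Hence the $B$-term contributes $R^{2-2s}Bc_{n,s}\ge 0$ to the fractional part.

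Second, a direct computation gives the local operator on each profile. For $\phi_\gamma=(1-|y|^2)^\gamma$ we have
\[
-\Delta\phi_\gamma=2n\gamma(1-|y|^2)^{\gamma-1}-4\gamma(\gamma-1)|y|^2(1-|y|^2)^{\gamma-2}.
\]
For $\gamma\in(0,1)$ both terms are nonnegative. The second term is at least $c\,(1-|y|^2)^{\gamma-2}$ near the boundary, which is $\gtrsim d^{\gamma-2}$. The first term is at least $2n\gamma(1-|y|^2)^{\gamma-1}>0$ everywhere, including the center.

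With $\gamma=s$ this gives $-\Delta\phi_s\ge c\,d^{s-2}\ge c\,d^{-2s}$, because $s-2\le -2s$ exactly when $s\le 2-\cdots$ holds, i.e. $3s\le 2$ is not needed. The relevant comparison is simply $d^{s-2}\ge d^{-2s}$ for $d\le 1$, which requires $s-2\le -2s$, i.e. $s\le 2/3$.

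This is the concern, and it is why the $\sigma$ profile is included: $\sigma-2=\min(s,1-s)-2\le -1-s\cdot 0$. Checking directly, $\sigma-2\le -2s$ holds when $\sigma\le 2-2s$, which is always true since $\sigma\le 1-s\le 2-2s$. So $-\Delta\phi_\sigma\ge c\,d^{\sigma-2}\ge c\,d^{-2s}$ near the boundary, and $-\Delta\phi_\sigma\ge 2n\sigma(1-|y|^2)^{\sigma-1}\ge 2n\sigma$ in the interior. Combining these, $-\Delta\phi_\sigma\ge c_1 d^{-2s}$ on all of $B_1$.

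\textbf{The fractional Laplacian of $\phi_\sigma$.} We need a lower bound
\[
(-\Delta)^s\phi_\sigma\ge -C_2\,d^{\sigma-2s}-C_2 .
\]
Since $\sigma<2s$ is possible, this may blow up negatively near the boundary. The standard estimate (Ros-Oton–Serra style) is $|(-\Delta)^s(d^\gamma)|\le C d^{\gamma-2s}$ for $\gamma\ne s$, $\gamma\in(0,2s)$. When $\sigma\ge 2s$ the quantity is bounded; otherwise it is $O(d^{\sigma-2s})$. Because $\sigma-2s>\sigma-2$, this term is dominated by the local gain $c\,d^{\sigma-2}$. Using $R^{2-2s}\le1$ and the $A$-terms,
\[
A\big(R^{2-2s}(-\Delta)^s\phi_\sigma-\Delta\phi_\sigma\big)\ge A\big(c\,d^{\sigma-2}+2n\sigma-C_2d^{\sigma-2s}-C_2\big).
\]
Near the boundary this is $\ge \tfrac{A c}{2}d^{\sigma-2}$.

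In the interior the lower-order negative constant $C_2$ is the issue, and this is where the $B$-profile is used. Its fractional part $R^{2-2s}Bc_{n,s}$ may be tiny, so it cannot absorb $C_2$. Its local part, however, satisfies $-B\Delta\phi_s\ge 2nsB$ everywhere, plus $B\,c\,d^{s-2}$ near the boundary. Choosing $B$ large relative to $AC_2$ therefore absorbs the interior negative constant. Choosing $A$ large then delivers $d^{-2s}$ via $d^{\sigma-2}\ge d^{-2s}$. The remaining constraint is that $C_2$ depends only on $(n,s)$, so this ordering of choices is consistent.

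\textbf{Main obstacle.} The main obstacle is the pointwise bound $(-\Delta)^s(1-|y|^2)_+^\sigma\ge -C(d^{\sigma-2s}+1)$ in $B_1$. I would prove it as follows. Split the integral into $B_{d/2}(y)$, where $\phi_\sigma$ is smooth with $|D^2\phi_\sigma|\lesssim d^{\sigma-2}$, giving a contribution $\lesssim d^{\sigma-2s}$. The remaining region is handled with $|\phi_\sigma(y)-\phi_\sigma(z)|\lesssim |y-z|^\sigma$ (Hölder continuity) for nearby points, and boundedness of $\phi_\sigma$ far away. This yields $\lesssim \int_{|z-y|>d/2}|y-z|^{\sigma-n-2s}dz+1\lesssim d^{\sigma-2s}+1$ when $\sigma<2s$. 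When $\sigma\ge 2s$ the same splitting gives a bounded (or logarithmic) term, still dominated by $d^{\sigma-2}$.
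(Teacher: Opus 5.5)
Your proposal is correct and is essentially the paper's proof. It uses the same barrier and the same splitting of $(-\Delta)^s$ of the $\sigma$-profile into $B_{\delta/2}(x)$ (second-derivative bound $\delta^{\sigma-2}$) and its complement (global $C^{\sigma}$ bound). That bound converges because $\sigma\le s<2s$ always holds, so your case $\sigma\ge 2s$ never arises. This gives $|(-\Delta)^s\phi_\sigma|\lesssim d^{\sigma-2s}$, which near the boundary is dominated by the local gain $d^{\sigma-2}\ge d^{-2s}$ (using $\sigma\le 2-2s$).

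There are two minor variations. You absorb the interior negative constant with the local term $-B\Delta\phi_s\ge 2nsB$ after discarding the factor $R^{2-2s}\le 1$. The paper instead keeps that factor on both sides and absorbs $-AC(\epsilon)R^{2-2s}$ with the paper's $B(-\Delta)^s w_2\ge C^{-1}BR^{2-2s}$. You also use a near-boundary/interior split instead of Young's inequality. Both versions close with $B/A$ fixed large and then $A$ large.
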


\begin{proof}
For simplicity, we write
\begin{equation*}
    w_1(x)=(1-\frac{|x|^{2}}{R^{2}})_+^\sigma,\quad\mbox{and }w_2(x)=(1-\frac{|x|^{2}}{R^{2}})_+^s.
\end{equation*}
By direct computation, one can verify that
\begin{equation*}
    [w_{1}]_{C^{\sigma}(\mathbb{R}^{n})}\leq CR^{-\sigma},\quad\mbox{and }|D^{2}w_{1}(x)|\leq C R^{-\sigma}\delta(x)^{\sigma-2}.
\end{equation*}
Besides, the following estimates for all $x\in B_{R}$ are easy to verify:
\begin{equation*}
    -\Delta w_1(x)\geq C^{-1}\cdot R^{-\sigma}\cdot\delta(x)^{\sigma-2},\quad-\Delta w_{2}(x)\geq0,\quad\mbox{and }(-\Delta)^{s}w_{2}(x)\geq C^{-1}R^{-2s}.
\end{equation*}

The most difficult part is to estimate $(-\Delta)^{s}w_{1}$. Let us make the following decomposition:
\begin{align*}
(-\Delta)^{s}w_{1}(x)=&C_{n,s}\,\mathrm{P.V.}\int_{B_{\delta/2}(x)} \frac{w_1(x)-w_1(y)}{|x-y|^{n+2s}}\,dy+C_{n,s}\int_{B_{2R}(x)\setminus B_{\delta/2}(x)}\frac{w_1(x)-w_1(y)}{|x-y|^{n+2s}}\,dy\\
&+C_{n,s}\int_{\mathbb{R}^n \backslash B_{2R}(x)}\frac{w_1(x)-w_{1}(y)}{|x-y|^{n+2s}}\,dy=:\tilde I_1+\tilde I_2+\tilde I_3.
\end{align*}
For $\tilde I_1$, we use the second derivative estimate of $w_{1}$ and get that:
\begin{equation*}
    |\tilde I_1|\leq C\int_{B_{\delta/2}(x)} \frac{\|D^2w_1\|_{L^{\infty}(B_{\delta/2}(x))}\cdot|x-y|^2}{|x-y|^{n+2s}}dy\leq C R^{-\sigma}\delta^{\sigma-2}\cdot\int_{B_{\delta/2}(x)}\frac{|x-y|^{2}dy}{|x-y|^{n+2s}}\leq C R^{-\sigma}\delta^{\sigma-2s}.
\end{equation*}
Applying H\"{o}lder continuity of $w_{1}$, we have (recall that $\sigma<2s$):
\begin{equation*}
    |\tilde I_2|\leq C\int_{B_{2R}(x)\setminus B_{\delta/2}(x)}\frac{[w_{1}]_{C^{\sigma}(\mathbb{R}^{n})}|x-y|^{\sigma}}{|x-y|^{n+2s}}\,dy\leq CR^{-\sigma}\cdot\int_{B_{2R}(x)\setminus B_{\delta/2}(x)}\frac{|x-y|^{\sigma}dy}{|x-y|^{n+2s}}\leq CR^{\sigma}\delta^{-\sigma-2s}.
\end{equation*}
Since $w_{1}\equiv0$ in $\mathbb{R}^n \backslash B_{2R}(x)$, we have:
\begin{equation*}
    |\tilde I_3|=C_{n,s}\int_{\mathbb{R}^n \backslash B_{2R}(x)}\frac{|w_1(x)|}{|x-y|^{n+2s}}\,dy\leq CR^{-\sigma}\delta^{\sigma}\cdot\int_{\mathbb{R}^n \backslash B_{2R}(x)}\frac{dy}{|x-y|^{n+2s}}\leq CR^{-\sigma-2s}\delta^{\sigma}.
\end{equation*}
In total, we conclude that
\begin{equation*}
    |(-\Delta)^{s}w_{1}(x)|\leq CR^{-\sigma}\delta(x)^{\sigma-2s},\quad\mbox{for all }x\in B_{R}.
\end{equation*}
By the Young's inequality, this implies the following estimate for sufficiently small $\epsilon$:
\begin{equation*}
    (-\Delta)^{s}w_{1}(x)\geq-\epsilon R^{2-2s-\sigma}\delta(x)^{\sigma-2}-C(\epsilon,s,\sigma)R^{-2s},
\end{equation*}
where $\displaystyle\lim_{\epsilon\to0}C(\epsilon,s,\sigma)=\infty$.

For $w(x)$, we have
\begin{equation*}
    (-\Delta)^{s}w(x)-\Delta w(x)\geq A\cdot\Big\{\frac{R^{2-\sigma}\cdot\delta(x)^{\sigma-2}}{C}-\epsilon R^{4-2s-\sigma}\delta(x)^{\sigma-2}-C(\epsilon,s,\sigma)R^{2-2s}\Big\}+\frac{B\cdot R^{2-2s}}{C}.
\end{equation*}
By choosing $\epsilon$ small, we have
\begin{equation*}
    \frac{R^{2-\sigma}\cdot\delta(x)^{\sigma-2}}{C}-\epsilon R^{4-2s-\sigma}\delta(x)^{\sigma-2}=(\frac{1}{C}-\epsilon R^{2-2s})\cdot R^{2-\sigma}\cdot\delta(x)^{\sigma-2}\geq\frac{1}{2C}\cdot R^{2-\sigma}\cdot\delta(x)^{\sigma-2}.
\end{equation*}
By letting the ratio $\frac{B}{A}$ be sufficiently large, we have
\begin{equation*}
    A\cdot C(\epsilon)R^{2-2s}\leq\frac{B\cdot R^{2-2s}}{C}.
\end{equation*}
Finally, we let $A$ be sufficiently large. Using the fact $\sigma\leq2-2s$, we have
\begin{equation*}
    (-\Delta)^{s}w(x)-\Delta w(x)\geq\frac{A}{2C}\cdot R^{2-\sigma}\cdot\delta(x)^{\sigma-2}\geq R^{2s}\cdot\delta(x)^{-2s}.
\end{equation*}
This proves that $w(x)$ is the desired upper solution.
\end{proof}

\section{Campanato iteration}
In this section, we use the Campanato iteration to obtain the pointwise Schauder estimate.
\subsection{Outline} We first make the following settings and notations:
    \begin{itemize}
        \item Let $\lambda\in(0,1)$, whose value will be decided later. Let $r_{k}:=\lambda^{k}r$. Denote $B_\ast:=B_\ast(0),~\ast=r,r_k$ or $ {r_k}/{2}$.
        
        \item We make the following abbreviation for the right-hand side information:
        \begin{equation*}
            \mathcal{A}:=\|u\|_{L^{\infty}(B_{r})}+r^{2}\cdot\int_{B_{r}^{c}}\frac{|u(x)|}{|x|^{n+2s}}dx+r^{2}\cdot\|F\|_{L^{\infty}(B_{r})}.
        \end{equation*}
        \item Let $\{P_{k}(x)\}_{k\geq0}$ be a sequence of $(m+2)$-order polynomials. As a start, we let $P_{0}(x)\equiv0$. Moreover, we let $\widetilde{P}_{k}(x):=P_{k}(x)\cdot\chi_{B_{r_{k}}}+u(x)\cdot\chi_{B_{r_{k}}^{c}}$.
        \item Let $G_{k}(x)$ be the $m$-th order Taylor polynomial of $(-\Delta)^{s}\widetilde{P}_{k}$ centered at the origin.
        \item For each $k\geq0$, let $h_{k}(x)$ be the solution of the following problem:
        \begin{equation}\label{eq. Dirichlet problem}
            \left\{\begin{aligned}
                &\Delta h_{k}(x)=G_{k}(x)-F(x)&\mbox{in }&B_{r_{k}/2},\\
                &h_{k}(x)=u(x)&\mbox{on }&\partial B_{r_{k}/2}.
            \end{aligned}\right.
        \end{equation}
        Let $\widetilde{h}_{k}(x):=h_{k}(x)\cdot\chi_{B_{r_{k}/2}}+u(x)\cdot\chi_{B_{r_{k}/2}^{c}}$.
        \item For each $k\geq1$, we define $P_{k}(x)$ as the $(m+2)$-order Taylor polynomial of $h_{k-1}(x)$ centered at the origin.
        \item We define $\epsilon_{k}$ for each $k\geq0$ as follows:
        \begin{equation*}
            \epsilon_{k}:=\|u(x)-P_{k}(x)\|_{L^{\infty}(B_{r_{k}})}.
        \end{equation*}
        Clearly, we have $\epsilon_{0}\leq\mathcal{A}$.
    \end{itemize}
    A constant $C$ is called uniform if it depends only on $(n,s,m,\alpha)$, but is independent of $(\lambda,k)$ and $u(x)$. During the proof, the uniform constant $C$ is allowed to change from line to line.

\subsection{The Dirichlet problem}
To understand the solution $h_{k}$ to the Dirichlet problem \eqref{eq. Dirichlet problem}, we must first estimate the effect of $(-\Delta)^{s}$ acting on polynomials.
\begin{lemma}\label{lem. (Delta)^s for tilde P_0}
    We have the following estimate of $(-\Delta)^{s}\widetilde{P}_{0}$ in $B_{r/2}$:
    \begin{equation*}
            r^{l}\Big\|D^{l}(-\Delta)^{s}\widetilde{P}_{0}\Big\|_{L^{\infty}(B_{r/2})}\leq\frac{C\cdot\mathcal{A}}{r^{2}},\quad\mbox{for all }0\leq l\leq m+1.
    \end{equation*}
\end{lemma}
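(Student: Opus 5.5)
Since $P_{0}\equiv0$, we have $\widetilde{P}_{0}=u\cdot\chi_{B_{r}^{c}}$, so for $x\in B_{r/2}$ there is no singularity:
\begin{equation*}
    (-\Delta)^{s}\widetilde{P}_{0}(x)=-C_{n,s}\int_{B_{r}^{c}}\frac{u(y)}{|x-y|^{n+2s}}dy .
\end{equation*}
I will differentiate under the integral sign in $x$. For $x\in B_{r/2}$ and $y\in B_{r}^{c}$ we have $|x-y|\geq|y|/2\geq r/2$, and the kernel satisfies $|D_{x}^{l}|x-y|^{-n-2s}|\leq C|x-y|^{-n-2s-l}\leq C|y|^{-n-2s-l}$. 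Since $|y|\geq r$, this gives $|y|^{-l}\leq r^{-l}$, hence
\begin{equation*}
    r^{l}|D^{l}(-\Delta)^{s}\widetilde{P}_{0}(x)|\leq C\int_{B_{r}^{c}}\frac{|u(y)|}{|y|^{n+2s}}dy,\qquad 0\leq l\leq m+1.
\end{equation*}
Differentiation under the integral is justified by dominated convergence, because $u\in\mathcal{L}_{2s}$ and the dominating function $|u(y)||y|^{-n-2s}$ is integrable on $B_{r}^{c}$.

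It remains to compare this with $\mathcal{A}/r^{2}$. By definition, $\mathcal{A}\geq r^{2}\int_{B_{r}^{c}}|u||x|^{-n-2s}dx$. Dividing by $r^{2}$ gives exactly $\int_{B_{r}^{c}}|u||y|^{-n-2s}dy\leq\mathcal{A}/r^{2}$, which is the claimed bound. (Note that $\mathcal{A}/r^{2}\geq\mathcal{A}/r^{2s}$ when $r\leq1$, so the $r^{-2}$ normalization is the weaker one and loses nothing here.)

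There is no real obstacle. The only care needed is the uniform lower bound $|x-y|\gtrsim|y|$ on $B_{r/2}\times B_{r}^{c}$, which makes the constant independent of $r$. The resulting $C$ depends only on $(n,s,m)$, so it is uniform in the sense of the paper.
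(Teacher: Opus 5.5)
Your proposal is correct and follows the paper's route: the paper only observes that $\widetilde{P}_{0}=u\cdot\chi_{B_{r}^{c}}$ and calls the rest ``a matter of simple integration''. Your argument supplies exactly that integration, namely differentiating under the integral sign using $|x-y|\geq|y|/2$ on $B_{r/2}\times B_{r}^{c}$, and then bounding $\int_{B_{r}^{c}}|u(y)||y|^{-n-2s}\,dy$ by $\mathcal{A}/r^{2}$ directly from the definition of $\mathcal{A}$.
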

\begin{proof}
    By noticing $\widetilde{P}_{0}=u(x)\cdot\chi_{B_{r}^{c}}$, the result is just a matter of simple integration.
\end{proof}

\begin{lemma}\label{lem. (Delta)^s difference in k}
    For each $k\geq1$, we have
    \begin{equation*}
        r_{k}^{l}\Big\|D^{l}(-\Delta)^{s}(\widetilde{P}_{k}-\widetilde{P}_{k-1})\Big\|_{L^{\infty}(B_{r_{k}/2})}\leq\frac{C}{r_{k}^{2s}}\cdot(\epsilon_{k}+\epsilon_{k-1}),\quad\mbox{for all }0\leq l\leq m+1.
    \end{equation*}
\end{lemma}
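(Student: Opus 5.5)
The plan is to apply Lemma~\ref{lem. very general (-Delta)^s osc estimate} directly. We take $R=r_{k}$, $\Omega=B_{r_{k-1}}$, $Q_{1}=P_{k}$, $Q_{2}=P_{k-1}$ and $v=u$. With these choices
\begin{equation*}
    \widetilde{Q}_{1}=P_{k}\chi_{B_{r_{k}}}+u\chi_{B_{r_{k}}^{c}}=\widetilde{P}_{k},\qquad \widetilde{Q}_{2}=P_{k-1}\chi_{B_{r_{k-1}}}+u\chi_{B_{r_{k-1}}^{c}}=\widetilde{P}_{k-1},
\end{equation*}
so $\widetilde{Q}=\widetilde{P}_{k}-\widetilde{P}_{k-1}$. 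Since $\lambda<1$, we have $B_{r_{k}}\subseteq B_{r_{k-1}}=\Omega$, and $\Omega$ is bounded. The hypothesis $v=u\in\mathcal{L}_{2s}$ holds by assumption, and both $P_{k},P_{k-1}$ are $(m+2)$-order polynomials. The lemma then gives, for $0\le l\le m+1$,
\begin{equation*}
    r_{k}^{l}\big\|D^{l}(-\Delta)^{s}(\widetilde{P}_{k}-\widetilde{P}_{k-1})\big\|_{L^{\infty}(B_{r_{k}/2})}\le\frac{C}{r_{k}^{2s}}\Big(\|P_{k}-P_{k-1}\|_{L^{\infty}(B_{r_{k}})}+\|u-P_{k-1}\|_{L^{\infty}(B_{r_{k-1}})}\Big).
\end{equation*}

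It remains to express the right-hand side through the $\epsilon$'s. By definition, $\|u-P_{k-1}\|_{L^{\infty}(B_{r_{k-1}})}=\epsilon_{k-1}$. For the other term we use the triangle inequality on $B_{r_{k}}\subseteq B_{r_{k-1}}$:
\begin{equation*}
    \|P_{k}-P_{k-1}\|_{L^{\infty}(B_{r_{k}})}\le\|u-P_{k}\|_{L^{\infty}(B_{r_{k}})}+\|u-P_{k-1}\|_{L^{\infty}(B_{r_{k-1}})}=\epsilon_{k}+\epsilon_{k-1}.
\end{equation*}
Combining the two bounds gives the claim with a constant $C=C(n,s,m)$. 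This constant is uniform: it does not depend on $\lambda$, because Lemma~\ref{lem. very general (-Delta)^s osc estimate} imposes no restriction on $\Omega$ beyond boundedness and $B_{R}\subseteq\Omega$.

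The main point to check is that the boundedness of $\Omega$ and the sup over all of $\Omega=B_{r_{k-1}}$ in the lemma match the definition of $\epsilon_{k-1}$ exactly. They do, and no part of the argument depends on the ratio $r_{k-1}/r_{k}=\lambda^{-1}$. Hence I expect no genuine obstacle beyond verifying the identification of $\widetilde{Q}_{1},\widetilde{Q}_{2}$ with $\widetilde{P}_{k},\widetilde{P}_{k-1}$.
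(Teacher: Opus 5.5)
Your proposal is correct and is the paper's proof. Both apply Lemma~\ref{lem. very general (-Delta)^s osc estimate} with $Q_1=P_k$, $Q_2=P_{k-1}$, $v=u$, $R=r_k$, $\Omega=B_{r_{k-1}}$, then bound $\|P_k-P_{k-1}\|_{L^\infty(B_{r_k})}$ by $\epsilon_k+\epsilon_{k-1}$ with the triangle inequality; you also rightly note, which the paper leaves implicit, that the constant does not depend on $\lambda$ because the lemma's bound is insensitive to the size of $\Omega$.
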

\begin{proof}
    Let us apply Lemma~\ref{lem. very general (-Delta)^s osc estimate} with
    \begin{equation*}
        Q_{1}=P_{k},\quad Q_{2}=P_{k-1},\quad v=u,\quad R=r_{k},\quad\Omega=B_{r_{k-1}}.
    \end{equation*}
    Then, we have
    \begin{equation*}
        r_{k}^{l}\Big\|D^{l}(-\Delta)^{s}(\widetilde{P}_{k}-\widetilde{P}_{k-1})\Big\|_{L^{\infty}(B_{r_{k}/2})}\leq\frac{C}{r_{k}^{2s}}\cdot\Big(\|P_{k}-P_{k-1}\|_{L^{\infty}(B_{r_{k}})}+\|P_{k-1}-u\|_{L^{\infty}(B_{r_{k-1}})}\Big)
    \end{equation*}
    for all $0\leq l\leq m+1$. Using $|P_{k}-P_{k-1}|\leq|P_{k}-u|+|P_{k-1}-u|$ and recalling the definition of $\epsilon_{k}$, we get the desired estimate.
\end{proof}

\begin{corollary}\label{cor. Lipschitz norm of (-Delta)^s after summation}
    For each $k\geq0$, we have
    \begin{equation*}
        \Big\|D^{m+1}(-\Delta)^{s}\widetilde{P}_{k}\Big\|_{L^{\infty}(B_{r_{k}/2})}\leq C\cdot\mathcal{O}_{k},\quad\mbox{where }\mathcal{O}_{k}:=\frac{\mathcal{A}}{r^{m+3}}+\sum_{j=0}^{k}\frac{\epsilon_{j}}{r_{j+1}^{m+1+2s}}.
    \end{equation*}
\end{corollary}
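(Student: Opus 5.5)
Since $\widetilde P_k=\widetilde P_0+\sum_{j=1}^{k}(\widetilde P_j-\widetilde P_{j-1})$, I will write
\begin{equation*}
D^{m+1}(-\Delta)^s\widetilde P_k=D^{m+1}(-\Delta)^s\widetilde P_0+\sum_{j=1}^{k}D^{m+1}(-\Delta)^s(\widetilde P_j-\widetilde P_{j-1})
\end{equation*}
and estimate each term on $B_{r_k/2}$ by the triangle inequality.

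The key observation is that $B_{r_k/2}\subseteq B_{r_j/2}$ for all $j\leq k$, because $r_j=\lambda^j r$ is decreasing. Hence each summand may be bounded by its $L^\infty(B_{r_j/2})$ norm.

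For the first term I apply Lemma~\ref{lem. (Delta)^s for tilde P_0} with $l=m+1$. This gives $\|D^{m+1}(-\Delta)^s\widetilde P_0\|_{L^\infty(B_{r/2})}\leq C\mathcal A/r^{m+3}$, which is exactly the first part of $\mathcal O_k$. For $j\geq1$ I apply Lemma~\ref{lem. (Delta)^s difference in k} with $l=m+1$, which gives the bound
\begin{equation*}
\frac{C(\epsilon_j+\epsilon_{j-1})}{r_j^{m+1+2s}}.
\end{equation*}

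It remains to match this with the $\epsilon$ part of $\mathcal O_k$, namely $\sum_{j=0}^{k}\epsilon_j/r_{j+1}^{m+1+2s}$. For the $\epsilon_{j-1}$ contribution I reindex with $i=j-1$, obtaining $\epsilon_i/r_{i+1}^{m+1+2s}$ for $0\leq i\leq k-1$, which appears directly in the target sum. For the $\epsilon_j$ contribution, $r_j\geq r_{j+1}$ gives $\epsilon_j/r_j^{m+1+2s}\leq\epsilon_j/r_{j+1}^{m+1+2s}$ for $1\leq j\leq k$, again a term of the target sum. Summing, each index $0\leq i\leq k$ is counted at most twice, so the total is bounded by $C\mathcal O_k$. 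The constant stays uniform: $\lambda$ enters only through the radii, and since $r_j\geq r_{j+1}$ the comparison runs in the favorable direction, so no power of $\lambda$ appears. The case $k=0$ is just the first estimate.

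The only point needing care is this index bookkeeping, in particular checking that the powers of $\lambda$ go the right way so that $C$ is independent of $\lambda$. There is no genuine analytic obstacle.
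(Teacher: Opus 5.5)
Your proposal is correct and is essentially the paper's argument: telescope $\widetilde P_k=\widetilde P_0+\sum_{j=1}^{k}(\widetilde P_j-\widetilde P_{j-1})$, use the nested balls $B_{r_k/2}\subseteq B_{r_j/2}$, apply Lemma~\ref{lem. (Delta)^s for tilde P_0} and Lemma~\ref{lem. (Delta)^s difference in k} with $l=m+1$, and reindex using $r_j\geq r_{j+1}$. Your bookkeeping also shows that the facts $\epsilon_0\leq\mathcal A$ and $r\leq1$, which the paper cites at this step, are not actually needed, because $\epsilon_0/r_1^{m+1+2s}$ already appears in $\mathcal O_k$.
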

\begin{proof}
    By combining Lemma~\ref{lem. (Delta)^s for tilde P_0} with Lemma~\ref{lem. (Delta)^s difference in k}, we have for each $k\geq0$:
    \begin{equation*}
        \Big\|D^{m+1}(-\Delta)^{s}\widetilde{P}_{k}\Big\|_{L^{\infty}(B_{r_{k}/2})}\leq\frac{C\cdot\mathcal{A}}{r^{m+3}}+\sum_{j=1}^{k}\frac{C}{r_{j}^{m+1+2s}}\cdot(\epsilon_{j}+\epsilon_{j-1}).
    \end{equation*}
    Then, by using the facts $\epsilon_{0}\leq\mathcal{A}$, $r_{j}\geq r_{j+1}$, and $r\leq1$, we obtain the desired estimate.
\end{proof}

Next, we obtain the following $C^{m+3}$ estimate of $h_{k}$.
\begin{lemma}\label{lem. C3 estimate for the local Dirichlet problem}
    For all $k\geq0$, and every $x\in B_{r_{k}/2}$, we write $\delta=dist(x,\partial B_{r_{k}/2})$. Then, for every $0\leq l\leq m+3$, we have the following estimate for $h_{k}$, which is the solution to \eqref{eq. Dirichlet problem}:
    \begin{equation*}
        \delta^{l}|D^{l}(h_{k}-P_{k})(x)|\leq\left\{\begin{aligned}
            &C\cdot(\epsilon_{k}+r_{k}^{2-2s}\epsilon_{k-1}),&\mbox{when }&k\geq1,\\
            &C\cdot\mathcal{A},&\mbox{when }&k=0.
        \end{aligned}\right.
    \end{equation*}
\end{lemma}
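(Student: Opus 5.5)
Set $g_k:=h_k-P_k$ on $B_{r_k/2}$. Since $P_k$ is a polynomial of degree $m+2$, we have $\Delta g_k=G_k-F-\Delta P_k$ in $B_{r_k/2}$ and $g_k=u-P_k$ on $\partial B_{r_k/2}$. The plan has three steps: show that the right-hand side $\Delta g_k$ is a polynomial of degree at most $m$ with controlled size, apply the maximum principle to bound $\|g_k\|_{L^\infty(B_{r_k/2})}$, and then obtain the weighted derivative bounds $\delta^l|D^l g_k(x)|$ from interior estimates for the Poisson equation on the ball $B_{\delta}(x)$ (or $B_{\delta/2}(x)$), rescaled.

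\textbf{Size of $\Delta g_k$ for $k\ge1$.} By construction, $P_k$ is the $(m+2)$-order Taylor polynomial of $h_{k-1}$ at the origin. Also $\Delta h_{k-1}=G_{k-1}-F$ is a polynomial of degree at most $m$. Hence $\Delta P_k$ equals the $m$-order Taylor polynomial of $\Delta h_{k-1}$, which is exactly $G_{k-1}-F$. Therefore
\begin{equation*}
\Delta g_k=G_k-G_{k-1},
\end{equation*}
the $m$-order Taylor polynomial at $0$ of $(-\Delta)^s(\widetilde P_k-\widetilde P_{k-1})$. By Lemma~\ref{lem. (Delta)^s difference in k}, every coefficient satisfies $r_k^{l}|D^l(G_k-G_{k-1})(0)|\le C r_k^{-2s}(\epsilon_k+\epsilon_{k-1})$, so
\begin{equation*}
r_k^{l}\|D^l\Delta g_k\|_{L^\infty(B_{r_k/2})}\le C r_k^{-2s}(\epsilon_k+\epsilon_{k-1}),\qquad 0\le l\le m.
\end{equation*}

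\textbf{Size of $\Delta g_0$.} For $k=0$ we have $P_0=0$, so $\Delta g_0=G_0-F$. Lemma~\ref{lem. (Delta)^s for tilde P_0} gives $\|G_0\|\le C\mathcal A/r^2$ together with its derivatives. Since $F$ is a degree-$m$ polynomial, $r^l\|D^lF\|_{L^\infty(B_{r/2})}\le C\|F\|_{L^\infty(B_r)}\le C\mathcal A/r^2$.

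\textbf{Sup bound.} Compare $g_k$ with a quadratic barrier $\pm(\sup_{\partial}|g_k|+\tfrac{1}{2n}\|\Delta g_k\|_\infty((r_k/2)^2-|x|^2))$. On the boundary, $|g_k|\le\|u-P_k\|_{L^\infty(B_{r_k})}=\epsilon_k$. Hence
\begin{equation*}
\|g_k\|_{L^\infty(B_{r_k/2})}\le \epsilon_k+Cr_k^{2-2s}(\epsilon_k+\epsilon_{k-1})\le C(\epsilon_k+r_k^{2-2s}\epsilon_{k-1}),
\end{equation*}
using $r_k\le1$. For $k=0$ the same argument gives $\|g_0\|\le \|u\|_{L^\infty(B_r)}+C r^2\cdot\mathcal A/r^2\le C\mathcal A$.

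\textbf{Derivatives and the main obstacle.} For $x$ at distance $\delta$ from $\partial B_{r_k/2}$, apply the standard interior estimate to the Poisson equation on $B_{\delta/2}(x)$. The classical Schauder/$C^{l}$ estimates apply since the right-hand side is smooth, and after scaling they read
\begin{equation*}
\delta^l|D^lg_k(x)|\le C\Big(\|g_k\|_{L^\infty(B_{\delta/2}(x))}+\sum_{j\le m+1}\delta^{2+j}\|D^j\Delta g_k\|_{L^\infty}\Big),\qquad l\le m+3.
\end{equation*}
Since $\Delta g_k$ is a polynomial, the terms with $j\le m$ are $\le C\delta^{2}r_k^{-2s}(\epsilon_k+\epsilon_{k-1})$ because $\delta\le r_k$, and $D^{m+1}\Delta g_k=0$. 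One could instead write $g_k$ minus an explicit polynomial particular solution of $\Delta q=\Delta g_k$ with $\|q\|\le Cr_k^{2-2s}(\epsilon_k+\epsilon_{k-1})$, leaving a harmonic function to which gradient estimates apply directly. Either route gives the claimed bound.

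I expect the only genuinely delicate point to be the identity $\Delta P_k=G_{k-1}-F$. This is what makes the right-hand side a \emph{difference} of consecutive $G$'s, and so yields $\epsilon_k+r_k^{2-2s}\epsilon_{k-1}$ rather than a non-decaying bound. It uses that the Laplacian of a Taylor polynomial of order $m+2$ is the Taylor polynomial of order $m$ of the Laplacian, which holds because $h_{k-1}$ is smooth (indeed real-analytic) in the interior.
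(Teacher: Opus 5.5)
Your proposal is correct and takes the same route as the paper. The identity $\Delta P_k=G_{k-1}-F$ turns the problem into a Poisson equation with polynomial right-hand side $G_k-G_{k-1}$ (respectively $G_0-F$), bounded via Lemmas~\ref{lem. (Delta)^s for tilde P_0} and~\ref{lem. (Delta)^s difference in k}, with boundary data bounded by $\epsilon_k$ (respectively $\mathcal{A}$); you simply spell out the maximum-principle barrier and the rescaled interior estimates that the paper compresses into ``apply the $C^{m+3}$ estimate.''
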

\begin{proof}
    Notice that $\Delta P_{k}(x)$ is equal to the $m$-order Taylor polynomial of $\Delta h_{k-1}(x)$ centered at the origin, we must have $\Delta P_{k}(x)=G_{k-1}(x)-F(x)$ for all $k\geq1$. Then, for all $x\in B_{r_{k}/2}$,
    \begin{equation*}
        \Delta(h_{k}-P_{k})(x)=\left\{\begin{aligned}
            &G_{k}(x)-G_{k-1}(x),&\mbox{when }&k\geq1,\\
            &\Delta h_{0}(x)=G_{0}(x)-F(x),&\mbox{when }&k=0.
        \end{aligned}\right.
    \end{equation*}
    The right-hand side is a $m$-order polynomial, whose estimate was previously given in Lemma~\ref{lem. (Delta)^s for tilde P_0} and Lemma~\ref{lem. (Delta)^s difference in k}. Besides, on $\partial B_{r_{k}/2}$, we have
    \begin{equation*}
        |(h_{k}-P_{k})(x)|=|(u-P_{k})(x)|\leq\epsilon_{k}.
    \end{equation*}
    Then we apply the $C^{m+3}$ estimate for \eqref{eq. Dirichlet problem}, and the desired estimate follows.
\end{proof}

\begin{lemma}\label{lem. rhs for h_k}
    Let $x\in B_{r_{k}/2}$, with $\delta=dist(x,\partial B_{r_{k}/2})$. Then,
    \begin{equation*}
        \Big|(-\Delta)^{s}\widetilde{h}_{k}-\Delta\widetilde{h}_{k}-F(x)\Big|\leq\left\{\begin{aligned}
            &\frac{C}{\delta^{2s}}\cdot(\epsilon_{k}+r_{k}^{2-2s}\epsilon_{k-1})+C r_{k}^{m+1}\cdot\mathcal{O}_{k},&\mbox{when }&k\geq1,\\
            &\frac{C}{\delta^{2s}}\cdot\mathcal{A}+C r_{k}^{m+1}\cdot\mathcal{O}_{k},&\mbox{when }&k=0.
        \end{aligned}\right.
    \end{equation*}
\end{lemma}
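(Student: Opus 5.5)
Inside $B_{r_k/2}$ we have $\Delta\widetilde h_k=\Delta h_k=G_k-F$. So the quantity to bound is
\begin{equation*}
(-\Delta)^{s}\widetilde h_k-\Delta\widetilde h_k-F=(-\Delta)^{s}\widetilde h_k-G_k=\Big[(-\Delta)^{s}(\widetilde h_k-\widetilde P_k)\Big]+\Big[(-\Delta)^{s}\widetilde P_k-G_k\Big].
\end{equation*}
I would estimate the two brackets separately.

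\textbf{The Taylor remainder.} By definition, $G_k$ is the $m$-th order Taylor polynomial of $(-\Delta)^{s}\widetilde P_k$ at the origin. Corollary~\ref{cor. Lipschitz norm of (-Delta)^s after summation} bounds the $(m+1)$-th derivative of $(-\Delta)^{s}\widetilde P_k$ by $C\mathcal{O}_k$ on $B_{r_k/2}$. Taylor's theorem then gives $|(-\Delta)^{s}\widetilde P_k-G_k|\le C r_k^{m+1}\mathcal{O}_k$ there. This is the second term in both cases.

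\textbf{The difference $\widetilde h_k-\widetilde P_k$.} Set $g:=\widetilde h_k-\widetilde P_k$. Both functions equal $u$ outside $B_{r_k}$, so $g$ is supported in $B_{r_k}$. On $B_{r_k/2}$ we have $g=h_k-P_k$, and on $B_{r_k}\setminus B_{r_k/2}$ we have $g=u-P_k$, so $|g|\le\epsilon_k$ on that annulus. Write $M_k$ for the bound in Lemma~\ref{lem. C3 estimate for the local Dirichlet problem}, namely $M_k=C(\epsilon_k+r_k^{2-2s}\epsilon_{k-1})$, or $C\mathcal{A}$ when $k=0$.

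Lemma~\ref{lem. C3 estimate for the local Dirichlet problem} gives $|g|\le M_k$ everywhere (taking $l=0$ and noting $\epsilon_k\le M_k$). It also gives $|D^2 g(y)|\le M_k\,\mathrm{dist}(y,\partial B_{r_k/2})^{-2}$ inside $B_{r_k/2}$.

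\textbf{Splitting the singular integral.} Fix $x\in B_{r_k/2}$ with $\delta=\mathrm{dist}(x,\partial B_{r_k/2})$, and split $(-\Delta)^{s}g(x)$ into $B_{\delta/2}(x)$ and its complement.
\begin{itemize}
\item On $B_{\delta/2}(x)$ every point has distance at least $\delta/2$ to $\partial B_{r_k/2}$, so $|D^2 g|\le CM_k\delta^{-2}$. The symmetrized principal value is then bounded by $CM_k\delta^{-2}\int_{B_{\delta/2}}|z|^{2-n-2s}\,dz\le CM_k\delta^{-2s}$.
\item On the complement, $|g(x)-g(y)|\le 2M_k$, so the integral is bounded by $CM_k\int_{|z|>\delta/2}|z|^{-n-2s}\,dz\le CM_k\delta^{-2s}$.
\end{itemize}
Together these give $|(-\Delta)^{s}g(x)|\le C\delta^{-2s}M_k$, which is the first term in each case.

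\textbf{Where care is needed.} Nothing here is delicate. The only point to check is that the bounds of Lemma~\ref{lem. C3 estimate for the local Dirichlet problem} really hold with the interior weight $\delta^{l}$ up to $l=2$. The paper states them for every $l$ up to $m+3$, so this is available. It is also worth noting that $g$ need not be continuous across $\partial B_{r_k/2}$ in general, but it is, since $h_k=u$ there. In any case only boundedness of $g$ is used away from $x$, so continuity plays no role.
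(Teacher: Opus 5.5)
Your argument is correct, and it is organized differently from the paper's. The paper inserts an intermediate comparison function. It takes $T$ to be the $(m+2)$-order Taylor polynomial of $h_k$ at the point $x$ and sets $\widetilde T=T\chi_{B_{\delta/2}(x)}+u\chi_{B_{\delta/2}(x)^c}$. It then writes the quantity as $(-\Delta)^s(\widetilde h_k-\widetilde T)(x)+(-\Delta)^s(\widetilde T-\widetilde P_k)(x)+[(-\Delta)^s\widetilde P_k-G_k](x)$:
\begin{itemize}
\item The first term is split again: the piece on $B_{\delta/2}(x)$ is controlled by $D^{m+3}(h_k-P_k)$, and there are two annular pieces.
\item The second term is handled by Lemma~\ref{lem. very general (-Delta)^s osc estimate} (the almost $s$-harmonic machinery), applied on the ball of radius $\delta/2$ centered at $x$ with $\Omega=B_{r_k}$.
\item The third term is exactly your Taylor remainder via Corollary~\ref{cor. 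Lipschitz norm of (-Delta)^s after summation}.
\end{itemize}

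You skip $T$ altogether and estimate $(-\Delta)^s(\widetilde h_k-\widetilde P_k)(x)$ directly. This uses only the $l=0$ and $l=2$ cases of Lemma~\ref{lem. C3 estimate for the local Dirichlet problem}, together with $|u-P_k|\le\epsilon_k$ on $B_{r_k}\setminus B_{r_k/2}$. Both routes end with the same bound $C\delta^{-2s}M_k$, so the extra Taylor polynomial buys nothing for this statement. Your version is shorter and does not need Lemma~\ref{lem. very general (-Delta)^s osc estimate} at this step.

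Your version is also cleaner on the far region. The paper bounds the annular difference $(h_k-P_k)(x)-(h_k-P_k)(y)$ using only a gradient bound on $B_{\delta/2}(x)$. That does not control points $y$ far from $x$, and the resulting $\delta^{1-2s}$ integral estimate fails as stated for $s\le 1/2$. Your plain $L^\infty$ bound $|g|\le M_k$ is exactly what handles that region correctly.
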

\begin{proof}
    Let $x\in B_{r_{k}/2}$ be arbitrary, and write $\delta=dist(x,\partial B_{r_{k}/2})$. Let $T(y)$ be the $(m+2)$-order Taylor expansion of $h_{k}(y)$ centered at $x$, and let
    \begin{equation*}
        \widetilde{T}(y)=T(y)\cdot\chi_{B_{\delta/2}(x)}+u(y)\cdot\chi_{B_{\delta/2}^{c}(x)}.
    \end{equation*}
    From the construction of $h_{k}$, we have
    \begin{align*}
        &(-\Delta)^{s}\widetilde{h}_{k}(x)-\Delta\widetilde{h}_{k}(x)-F(x)=(-\Delta)^{s}\widetilde{h}_{k}(x)-G_{k}(x)\\
        =&(-\Delta)^{s}(\widetilde{h}_{k}-\widetilde{T})(x)+(-\Delta)^{s}(\widetilde{T}-\widetilde{P}_{k})(x)+\Big[(-\Delta)^{s}\widetilde{P}_{k}(x)-G_{k}(x)\Big]\\
        =:&I_{1}+I_{2}+I_{3}.
    \end{align*}
    
    \textbf{Estimate of $I_{1}$.} One easily notice that $(\widetilde{h}_{k}-\widetilde{T})$ is supported in $B_{r_{k}/2}$, so
    \begin{align*}
        I_{1}=&\int_{B_{\delta/2}(x)}\frac{(\widetilde{h}_{k}-\widetilde{T})(x)-(\widetilde{h}_{k}-\widetilde{T})(y)}{|x-y|^{n+2s}}dy+\int_{B_{r_{k}/2}\setminus B_{\delta/2}(x)}\frac{(\widetilde{h}_{k}-\widetilde{P}_{k})(x)-(\widetilde{h}_{k}-\widetilde{P}_{k})(y)}{|x-y|^{n+2s}}dy\\
        &+\int_{B_{r_{k}/2}\setminus B_{\delta/2}(x)}\frac{(\widetilde{P}_{k}-\widetilde{T})(x)-(\widetilde{P}_{k}-\widetilde{T})(y)}{|x-y|^{n+2s}}dy=: I_{11}+I_{12}+I_{13}.
    \end{align*}
    As $D^{m+3}P_{k}\equiv D^{m+3}T\equiv0$, we can estimate $I_{11}$ by
    \begin{equation*}
        |I_{11}|\leq C\cdot\delta^{m+3-2s}\cdot\|D^{m+3}(h_{k}-P_{k})\|_{L^{\infty}(B_{\delta/2}(x))}
    \end{equation*}
    As for $I_{12}$, we have
    \begin{equation*}
        |I_{12}|\leq C\cdot\delta^{1-2s}\cdot\|\nabla(h_{k}-P_{k})\|_{L^{\infty}(B_{\delta/2}(x))}
    \end{equation*}
    As for $I_{3}$, by noticing that $\widetilde{P}_{k}-\widetilde{T}=P_{k}-u$ in $B_{r_{k}/2}\setminus B_{\delta/2}(x)$, we have
    \begin{equation*}
        |I_{13}|\leq C\cdot\delta^{-2s}\cdot\epsilon_{k}.
    \end{equation*}
    Using Lemma~\ref{lem. C3 estimate for the local Dirichlet problem}, we then have
    \begin{equation*}
        |I_{1}|\leq\left\{\begin{aligned}
            &\frac{C}{\delta^{2s}}\cdot(\epsilon_{k}+r_{k}^{2-2s}\epsilon_{k-1}),&\mbox{when }&k\geq1,\\
            &\frac{C}{\delta^{2s}}\cdot\mathcal{A},&\mbox{when }&k=0.
        \end{aligned}\right.
    \end{equation*}

    \textbf{Estimate of $I_{2}$.} In Lemma~\ref{lem. very general (-Delta)^s osc estimate}, we set
    \begin{equation*}
        Q_{1}(y)=T(y-x),\quad Q_{2}(y)=P_{k}(y-x),\quad v(y)=u(y-x),\quad R=\delta/2,\quad\Omega=B_{r_{k}}(-x).
    \end{equation*}
    Using the fact that $\|Q_{2}-v\|_{L^{\infty}(\Omega)}=\|P_{k}-u\|_{L^{\infty}(B_{r_{k}})}=\epsilon_{k}$, we have
    \begin{equation*}
        |I_{2}|\leq\frac{C}{\delta^{2s}}\cdot\Big(\|Q\|_{L^{\infty}(B_{R})}+\|v-Q_{2}\|_{L^{\infty}(\Omega)}\Big)=\frac{C}{\delta^{2s}}\cdot\Big(\|T-P_{k}\|_{L^{\infty}(B_{\delta/2}(x))}+\epsilon_{k}\Big).
    \end{equation*}
    The term $\|T-P_{k}\|_{L^{\infty}(B_{\delta/2}(x))}$ can be estimated by Lemma~\ref{lem. C3 estimate for the local Dirichlet problem}, so
    \begin{equation*}
        |I_{2}|\leq\left\{\begin{aligned}
            &\frac{C}{\delta^{2s}}\cdot(\epsilon_{k}+r_{k}^{2-2s}\epsilon_{k-1}),&\mbox{when }&k\geq1,\\
            &\frac{C}{\delta^{2s}}\cdot\mathcal{A},&\mbox{when }&k=0.
        \end{aligned}\right.
    \end{equation*}

    \textbf{Estimate of $I_{3}$.} Since $x\in B_{r_{k}/2}$, we have $|I_{3}|\leq\|D^{m+1}(-\Delta)^{s}\widetilde{P}_{k}\|_{L^{\infty}(B_{r_{k}/2})}\cdot r_{k}^{m+1}$. Then, by Corollary~\ref{cor. Lipschitz norm of (-Delta)^s after summation},
    \begin{equation*}
        |I_{3}|\leq C\cdot r_{k}^{m+1}\cdot\mathcal{O}_{k}.
    \end{equation*}

    Combining the estimates for $I_{1}$-$I_{3}$ gives the desired estimate.
\end{proof}

\begin{lemma}\label{L6}
    We have the following estimate for $\epsilon_{k+1}=\|u-P_{k+1}\|_{L^{\infty}(B_{r_{k+1}})}:$
    \begin{equation*}
        \epsilon_{k+1}\leq\left\{\begin{aligned}
            &C(r_{k}^{2-2s}+\lambda^{m+3})\cdot(\epsilon_{k}+r_{k}^{2-2s}\epsilon_{k-1})+C\cdot r_{k}^{m+3}\cdot\mathcal{O}_{k}+Cr_{k}^{m+2}\omega(r_{k}),&\mbox{when }&k\geq1,\\
            &C(r_{k}^{2-2s}+\lambda^{m+3})\cdot\mathcal{A}+C\cdot r_{k}^{m+3}\cdot\mathcal{O}_{k}+Cr_{k}^{m+2}\omega(r_{k}),&\mbox{when }&k=0.
        \end{aligned}\right.
    \end{equation*}
\end{lemma}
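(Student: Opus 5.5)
Since $P_{k+1}$ is the $(m+2)$-order Taylor polynomial of $h_k$ at the origin and $r_{k+1}=\lambda r_k\le r_k/4$ (take $\lambda\le 1/4$), I split
\begin{equation*}
\|u-P_{k+1}\|_{L^\infty(B_{r_{k+1}})}\le \|u-\widetilde h_k\|_{L^\infty(B_{r_k/2})}+\|h_k-P_{k+1}\|_{L^\infty(B_{r_{k+1}})}.
\end{equation*}
I estimate the two terms separately. The first is a comparison between $u$ and $\widetilde h_k$ via the barrier of Lemma~\ref{lem. barrier}. The second is a Taylor remainder controlled by Lemma~\ref{lem. C3 estimate for the local Dirichlet problem}.

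\textbf{Taylor term.} Because $P_k$ is a polynomial of degree $m+2$, the Taylor polynomial of $h_k-P_k$ at $0$ equals $P_{k+1}-P_k$. Hence $h_k-P_{k+1}$ is the $(m+3)$-order Taylor remainder of $h_k-P_k$. On $B_{r_k/4}$ the distance to $\partial B_{r_k/2}$ is at least $r_k/4$, so Lemma~\ref{lem. C3 estimate for the local Dirichlet problem} with $l=m+3$ gives $|D^{m+3}(h_k-P_k)|\le C r_k^{-(m+3)}(\epsilon_k+r_k^{2-2s}\epsilon_{k-1})$, or $Cr_k^{-(m+3)}\mathcal{A}$ when $k=0$. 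Therefore
\begin{equation*}
\|h_k-P_{k+1}\|_{L^\infty(B_{r_{k+1}})}\le C\lambda^{m+3}(\epsilon_k+r_k^{2-2s}\epsilon_{k-1}),
\end{equation*}
which is the $\lambda^{m+3}$ part of the claim.

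\textbf{Comparison term.} Set $e=u-\widetilde h_k$. Then $e$ vanishes outside $B_{r_k/2}$, and $u$ and $h_k$ agree on $\partial B_{r_k/2}$, so $e$ is continuous. In $B_{r_k/2}$ we have
\begin{equation*}
(-\Delta)^s e-\Delta e = f-\big[(-\Delta)^s\widetilde h_k-\Delta \widetilde h_k\big] = (f-F)-\big[(-\Delta)^s\widetilde h_k-\Delta\widetilde h_k-F\big].
\end{equation*}
The first bracket is bounded by $\omega(r_k)r_k^m$, taking $\omega$ nondecreasing (or replacing it by its supremum). The second is bounded by Lemma~\ref{lem. rhs for h_k}. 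Altogether
\begin{equation*}
|(-\Delta)^s e-\Delta e|\le \frac{C}{\delta^{2s}}E_k+K_k,\qquad E_k:=\epsilon_k+r_k^{2-2s}\epsilon_{k-1}\ (\text{or }\mathcal{A}),
\end{equation*}
where $K_k:=Cr_k^{m+1}\mathcal{O}_k+\omega(r_k)r_k^m$.

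I then compare $\pm e$ with the supersolution
\begin{equation*}
\Phi=\frac{C E_k}{R^{2s}}\,w+K_k\,\frac{R^2-|x|^2}{2n}\chi_{B_R},\qquad R=r_k/2,
\end{equation*}
where $w$ is the function of Lemma~\ref{lem. barrier} with radius $R$. The quadratic part $q=(R^2-|x|^2)_+/(2n)$ satisfies $-\Delta q=1$ and $(-\Delta)^s q\ge0$ in $B_R$, because $q\ge0$ and $q$ is concave inside. Hence $\Phi$ dominates the right-hand side, and $\Phi\ge0=\pm e$ outside $B_R$. The maximum principle for $(-\Delta)^s-\Delta$ (for continuous, $C^2$-inside functions the operator is strictly positive at an interior positive maximum) then gives $|e|\le\Phi$. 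Using $\|w\|_\infty\le CR^2$, this yields
\begin{equation*}
\|e\|_{L^\infty}\le C r_k^{2-2s}E_k+C r_k^2K_k= C r_k^{2-2s}E_k+C r_k^{m+3}\mathcal{O}_k+Cr_k^{m+2}\omega(r_k).
\end{equation*}
Adding the Taylor bound gives exactly the stated inequality in both cases $k\ge1$ and $k=0$.

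The main obstacle is justifying the comparison step. The function $\widetilde h_k$ is only $C^2$ inside $B_{r_k/2}$ and merely continuous across its boundary, and the right-hand side is singular like $\delta^{-2s}$ near $\partial B_{r_k/2}$. This is exactly why the barrier $w$ was built with the $\delta^{-2s}$ lower bound. One must also check that $(-\Delta)^s\widetilde h_k$ is pointwise well-defined, which holds because $e$ is continuous and vanishes outside $B_R$. I would handle the maximum principle by evaluating at the point where $\pm e-\Phi$ attains a positive maximum, which must lie in the open ball since $\pm e-\Phi\le0$ outside it.
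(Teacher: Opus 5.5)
Your splitting, the Taylor-remainder bound via Lemma~\ref{lem. C3 estimate for the local Dirichlet problem} on $B_{r_k/4}$, the computation of $(-\Delta)^s e-\Delta e$ through Lemma~\ref{lem. rhs for h_k}, and the idea of bounding $u-h_k$ by comparison all follow the paper's proof. However, the function $\Phi$ you write down is not a supersolution. The claim that $q=(R^2-|x|^2)_+/(2n)$ satisfies $(-\Delta)^s q\ge0$ in $B_R$ is false for every $s\in(0,1)$. Concavity inside $B_R$ does not help, because $q$ is not concave on $\mathbb{R}^n$: the truncation creates a convex corner along $\partial B_R$. So for $x$ near $\partial B_R$ the second differences $2q(x)-q(x+z)-q(x-z)$ are negative for many $z$. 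In fact $q\ge0$ in $\mathbb{R}^n$ and $q\not\equiv0$, so $s$-superharmonicity in $B_R$ would force $q\ge c\,(R-|x|)^s$ by the fractional Hopf lemma. This is incompatible with $q\le R(R-|x|)/n$. For $s>\frac12$ one even has $(-\Delta)^s q\sim -c\,R\,\delta^{1-2s}\to-\infty$ at $\partial B_R$; compare the half-line identity $(-\Delta)^s x_+=-c\,x^{1-2s}$ with $c>0$. Hence $(-\Delta)^s(K_kq)-\Delta(K_kq)$ drops below $K_k$ near $\partial B_R$, and becomes negative there when $s>\frac12$. The $w$-part of $\Phi$ cannot compensate, because its size is proportional to $E_k$, while $K_k$ contains $\omega(r_k)r_k^m$ and is not controlled by $E_k$.

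The fix is to let the barrier of Lemma~\ref{lem. barrier} absorb the bounded part as well; this is how the paper's one-line appeal to that lemma should be read. Since $\delta\le R\le1$, we have $R^{2s}\delta^{-2s}\ge1$ in $B_R$. So $\Phi':=(CE_kR^{-2s}+K_k)\,w$ satisfies $(-\Delta)^s\Phi'-\Delta\Phi'\ge CE_k\delta^{-2s}+K_k$ in $B_R$ and vanishes outside $B_R$. Then $\|w\|_{L^\infty}\le(A+B)R^2$ yields $\|e\|_{L^\infty}\le Cr_k^{2-2s}E_k+Cr_k^{2}K_k$, which is exactly the bound you wanted. Your maximum-principle argument then goes through verbatim with $\Phi'$ in place of $\Phi$.

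A minor point: you do not need $\omega$ to be monotone. The hypothesis gives $\|f-F\|_{L^\infty(B_{r_k/2})}\le\|f-F\|_{L^\infty(B_{r_k})}\le\omega(r_k)r_k^m$ directly. Replacing $\omega$ by a supremum would weaken the lemma in a way that matters for parts (2) and (3) of Theorem~\ref{t1}.
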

\begin{proof}
    We decompose $\|u-P_{k+1}\|_{L^{\infty}(B_{r_{k+1}})}$ into the following two parts:
    \begin{equation*}
        \|u-P_{k+1}\|_{L^{\infty}(B_{r_{k+1}})}\leq\|u-h_{k}\|_{L^{\infty}(B_{r_{k}/2})}+\|h_{k}-P_{k+1}\|_{L^{\infty}(B_{r_{k+1}})}.
    \end{equation*}
    By Lemma~\ref{lem. rhs for h_k}, we have the following estimate for $\{(-\Delta)^{s}-\Delta\}(u-h_{k})$: Let $x\in B_{r_{k}/2}$, with $\delta=dist(x,\partial B_{r_{k}/2})$, then
    \begin{equation*}
        \Big|\{(-\Delta)^{s}-\Delta\}(u-h_{k})\Big|(x)\leq\left\{\begin{aligned}
            &\frac{C}{\delta^{2s}}\cdot(\epsilon_{k}+r_{k}^{2-2s}\epsilon_{k-1})+C r_{k}^{m+1}\cdot\mathcal{O}_{k}+\omega(r_{k})r_{k}^{m},&\mbox{when }&k\geq1,\\
            &\frac{C}{\delta^{2s}}\cdot\mathcal{A}+C r_{k}^{m+1}\cdot\mathcal{O}_{k}+\omega(r_{k})r_{k}^{m},&\mbox{when }&k=0.
        \end{aligned}\right.
    \end{equation*}
    As $u-h_{k}$ is supported in $B_{r_{k}/2}$ with $r_{k}/2\leq1$, we then apply Lemma~\ref{lem. barrier}, and get that
    \begin{equation*}
        \|u-h_{k}\|_{L^{\infty}(B_{r_{k}/2})}\leq\left\{\begin{aligned}
            &Cr_{k}^{2-2s}\cdot(\epsilon_{k}+r_{k}^{2-2s}\epsilon_{k-1})+C\cdot r_{k}^{m+3}\cdot\mathcal{O}_{k}+Cr_{k}^{m+2}\omega(r_{k}),&\mbox{when }&k\geq1,\\
            &Cr_{k}^{2-2s}\mathcal{A}+C\cdot r_{k}^{m+3}\cdot\mathcal{O}_{k}+Cr_{k}^{m+2}\omega(r_{k}),&\mbox{when }&k=0.
        \end{aligned}\right.
    \end{equation*}
    As $P_{k}$'s are $(m+2)$-order polynomials, we must have $D^{m+3}(h_{k}-P_{k+1})=D^{m+3}(h_{k}-P_{k})$ in $B_{r_{k}/2}$. Then, as $P_{k+1}$ is the $(m+2)$-order Taylor polynomial of $h_{k}$, we must have:
    \begin{equation*}
        \|h_{k}-P_{k+1}\|_{L^{\infty}(B_{r_{k+1}})}\leq\|D^{m+3}(h_{k}-P_{k+1})\|_{L^{\infty}(B_{r_{k+1}})}\cdot r_{k+1}^{m+3}\leq\|D^{m+3}(h_{k}-P_{k})\|_{L^{\infty}(B_{r_{k}/4})}\cdot r_{k+1}^{m+3}.
    \end{equation*}
    By Lemma~\ref{lem. C3 estimate for the local Dirichlet problem}, we have
    \begin{equation*}
        \|h_{k}-P_{k+1}\|_{L^{\infty}(B_{r_{k+1}})}\leq\left\{\begin{aligned}
            &C\lambda^{m+3}(\epsilon_{k}+r_{k}^{2-2s}\epsilon_{k-1}),&\mbox{when }&k\geq1,\\
            &C\lambda^{m+3}\mathcal{A},&\mbox{when }&k=0.
        \end{aligned}\right.
    \end{equation*}
    By adding up the estimates for $\|u-h_{k}\|_{L^{\infty}(B_{r_{k}/2})}$ and $\|h_{k}-P_{k+1}\|_{L^{\infty}(B_{r_{k+1}})}$, we get the desired estimate.
\end{proof}

\subsection{Proof of the Schauder estimate}
So far, we have used $P_{k}$'s to approximate the solution $u$ in $B_{r_{k}}$, where $P_{k}$ is a $(m+2)$-order polynomial (where $m\geq0$).

Now for each $(m+2)$-order polynomial $P$, we let $\Pi_{l}[P]$ be the $l$-degree term of $P$. This means that $\Pi_{l}[P]$ is a $l$-order homogeneous polynomial and $\displaystyle P=\sum_{l=0}^{m+2}\Pi_{l}[P]$. For such a decomposition, we see that the following inequality holds for all $(m+2)$-order polynomials $P$'s:
\begin{equation}\label{eq. coefficient of polynomial}
    \Big\|\Pi_{l}[P]\Big\|_{L^{\infty}(B_{r})}\leq C(n,m)\lambda^{-kl}\|P\|_{L^{\infty}(B_{r_{k}})},\quad\mbox{for all }k\geq0\mbox{ and }0\leq l\leq m+2.
\end{equation}
Notice that the left-hand side of \eqref{eq. coefficient of polynomial} represents the magnitude of the coefficients of $\Pi_{l}[P]$.

Then, we denote $\Pi_{k,l}:=\Pi_{l}[P_{k}]$. Recall that as we have chosen $P_{0}\equiv0$, we must have $\Pi_{0,l}\equiv0$. We will later apply \eqref{eq. coefficient of polynomial} to obtain the convergence of the approximating polynomials (except possibly for the highest degree term).
\begin{proof}[Proof of Theorem \ref{t1}]
Let us treat $\alpha=0$ in Theorem \ref{t1} (1)(2). Since $s\in(0,1)$, we have $2-2s>0$. We can fix a $\lambda>0$ sufficiently small such that
\begin{equation*}
C\lambda^{2-2s}\leq\frac{1}{8},\quad\mbox{and }C\lambda^{m+3}\leq\frac{1}{8}\lambda^{m+2+\alpha}.
\end{equation*}
For such a fixed uniform $\lambda$, there exists a sufficiently large $K_{0}=K_{0}(n,s,m,\alpha)$, such that
\begin{equation*}
    C r_k^{2-2s}\leq\frac{1}{8}\lambda^{m+2+\alpha}\mbox{ and }\frac{r_{k}^{2-2s}}{\lambda^{m+1+2s}}\leq\frac{1}{8}\lambda^{2m+4+2\alpha},\quad\mbox{for all }k \ge K_0.
\end{equation*}

Let us denote
\begin{equation*}
    \tau_{k}=r_{k}^{m+3}\mathcal{O}_{k},\quad S_k=\tau_k+\lambda^{m+3}\epsilon_{k},\quad\mbox{and }\widetilde{S}_{k}=r_{k}^{-(m+2+\alpha)}S_{k}.
\end{equation*}
We claim that there exists some uniform $q\in(0,1)$, such that
\begin{equation}\label{eq. claim of tilde S_k}
    S_{k}\leq C_{1}\lambda^{k(m+2+\alpha)}\cdot\Big(q^{k}\mathcal{A}+r^{m+2+\alpha}\cdot\sum_{j=0}^{k-1}q^{k-j}\frac{\omega(r_{j})}{r_{j}^{\alpha}}\Big),\quad\mbox{for all }k\geq0.
\end{equation}

To prove \eqref{eq. claim of tilde S_k}, we first notice that $\epsilon_{0}\leq\mathcal{A}$ and $\mathcal{O}_{k}=\frac{\mathcal{A}}{r^{m+3}}+\frac{\epsilon_{0}}{r_{1}^{m+1+2s}}$ imply that $S_{0}\leq C\mathcal{A}$. Next, for $k\leq K_{0}$, we apply Corollary~\ref{cor. Lipschitz norm of (-Delta)^s after summation} and Lemma~\ref{L6} finitely many times, and obtain that \eqref{eq. claim of tilde S_k} holds for $k\leq K_{0}$ by choosing $C_{1}$ sufficiently large.

Now we let $k\geq K_{0}$, by Corollary~\ref{cor. Lipschitz norm of (-Delta)^s after summation} and Lemma~\ref{L6}, we have
\begin{equation*}
    \left\{\begin{aligned}
        &\epsilon_{k+1}\leq\frac{1}{4}\lambda^{m+2+\alpha}\epsilon_{k}+\frac{1}{4}\lambda^{2m+4+2\alpha}\epsilon_{k-1}+C\cdot\tau_{k}+C\cdot r_{k}^{m+2}\omega(r_{k}),\\
        &\tau_{k+1}\leq\frac{1}{8}\lambda^{m+2+\alpha}\tau_{k}+\frac{1}{8}\lambda^{2m+4+2\alpha}\epsilon_{k+1},
    \end{aligned}\right.\quad\mbox{for all }k\geq K_{0}.
\end{equation*}
It follows that
\begin{equation*}
    S_{k+1}\leq\frac{1}{3}\lambda^{m+2+\alpha} S_k+\frac{1}{3}\lambda^{2m+4+2\alpha}S_{k-1}+C\la^{m+3} r_{k}^{m+2}\omega(r_{k}),\quad\mbox{for all }k\geq K_{0}.
\end{equation*}
Dividing both sides by $r_{k+1}^{m+2+\alpha}$ gives $\displaystyle\widetilde{S}_{k+1}\leq\frac{1}{3}\widetilde{S}_{k}+\frac{1}{3}\widetilde{S}_{k-1}+\frac{\omega(r_{k})}{r_{k}^{\alpha}}$. With this, an inductive argument shows that \eqref{eq. claim of tilde S_k} holds for all $k\geq K_{0}$ with $q=0.9$.

From \eqref{eq. claim of tilde S_k}, we see that
\begin{equation}\label{eq. final estimate of epsilon_k}
    \epsilon_{k}\leq C\lambda^{k(m+2+\alpha)}\cdot\Big(q^{k}\mathcal{A}+r^{m+2+\alpha}\cdot\sum_{j=0}^{k-1}q^{k-j}\frac{\omega(r_{j})}{r_{j}^{\alpha}}\Big),\quad\mbox{for all }k\geq0.
\end{equation}
Noticing that $\|P_{k+1}-P_{k}\|_{L^{\infty}(B_{r_{k+1}})}\leq\epsilon_{k+1}+\epsilon_{k}$, it then follows from \eqref{eq. coefficient of polynomial} and \eqref{eq. final estimate of epsilon_k} that
\begin{equation}\label{eq. adjacent coefficient difference}
    \|\Pi_{k+1,l}-\Pi_{k,l}\|_{L^{\infty}(B_{r})}\leq C\lambda^{k(m+2+\alpha-l)}\cdot\Big(q^{k}\mathcal{A}+r^{m+2+\alpha}\cdot\sum_{j=0}^{k-1}q^{k-j}\frac{\omega(r_{j})}{r_{j}^{\alpha}}\Big)
\end{equation}
holds for all $k\geq0$ and $0\leq l\leq m+2$. Now we prove Theorem~\ref{t1} in three cases:

\textbf{Proof of part (1).} Let $\displaystyle\mathcal{S}:=\sup_{\rho\leq r}\omega(\rho)$. Then, it follows from \eqref{eq. final estimate of epsilon_k} and \eqref{eq. adjacent coefficient difference} that
\begin{equation*}
    \epsilon_{k}\leq C\lambda^{k(m+2)}\cdot(q^{k}\mathcal{A}+r^{m+2}\mathcal{S})\mbox{ and }\|\Pi_{k+1,l}-\Pi_{k,l}\|_{L^{\infty}(B_{r})}\leq C\lambda^{k(m+2-l)}\cdot(q^{k}\mathcal{A}+r^{m+2}\mathcal{S}).
\end{equation*}
In particular, by the fact $\Pi_{0,l}\equiv0$, we have
\begin{equation*}
    \|\Pi_{k,l}\|_{L^{\infty}(B_{r})}\leq\left\{\begin{aligned}
        &C\cdot(\mathcal{A}+r^{m+2}\mathcal{S}),&\mbox{if }&0\leq l\leq m+1,\\
        &Ck\cdot(\mathcal{A}+r^{m+2}\mathcal{S}),&\mbox{if }&l=m+2.
    \end{aligned}\right.
\end{equation*}
Moreover, the coefficients of $\Pi_{k,l}$ form a Cauchy sequence (index by $k$) for every $0\leq l\leq m+1$. Then, let $P$ be a $(m+1)$-order polynomial defined as $\displaystyle P=\lim_{k\to\infty}\Big(\sum_{l=0}^{m+1}\Pi_{k,l}\Big)$. It is then obvious that $\|P\|_{L^{\infty}(B_{r})}\leq C\cdot(\mathcal{A}+r^{m+2}\mathcal{S})$. Moreover, for each $k\geq0$, we have
\begin{align*}
    \|u-P\|_{L^{\infty}(B_{r_{k}})}\leq&\epsilon_{k}+\lambda^{k(m+2)}\|\Pi_{k,m+2}\|_{L^{\infty}(B_{r})}+\sum_{j=k}^{\infty}\Big(\sum_{l=0}^{m+1}\lambda^{kl}\|\Pi_{j+1,l}-\Pi_{j,l}\|_{L^{\infty}(B_{r})}\Big)\\
    \leq&C\lambda^{k(m+2)}k\cdot(\mathcal{A}+r^{m+2}\mathcal{S})\leq C\cdot(\mathcal{A}+r^{m+2}\mathcal{S})\cdot(\frac{r_{k}}{r})^{m+2}\ln{(\frac{2r}{r_{k}})}.
\end{align*}

\textbf{Proof of part (2).} Similarly, from \eqref{eq. final estimate of epsilon_k} and \eqref{eq. adjacent coefficient difference}, one has:
\begin{equation*}
     \epsilon_{k}\leq C\lambda^{k(m+2)}\cdot\Big(q^{k}\mathcal{A}+r^{m+2}\cdot\sum_{j=0}^{k-1}q^{k-j}\omega(r_{j})\Big),
\end{equation*}
and
\begin{equation*}
    \|\Pi_{k+1,l}-\Pi_{k,l}\|_{L^{\infty}(B_{r})}\leq C\lambda^{k(m+2-l)}\cdot\Big(q^{k}\mathcal{A}+r^{m+2}\cdot\sum_{j=0}^{k-1}q^{k-j}\omega(r_{j})\Big),\quad\mbox{for all }0\leq l\leq m+2.
\end{equation*}
Then for all $0\leq k_{1}\leq k_{2}$ and all $0\leq l\leq m+2$, it holds that
\begin{equation*}
    \|\Pi_{k_{2},l}-\Pi_{k_{1},l}\|_{L^{\infty}(B_{r})}\leq C\lambda^{k_{1}(m+2-l)}\cdot\Big(q^{k_{1}}\mathcal{A}+r^{m+2}\sum_{j=k_{1}}^{k_{2}-1}\omega(r_{j})+r^{m+2}\cdot\sum_{j=0}^{k_{1}-1}q^{k_{1}-j}\omega(r_{j})\Big).
\end{equation*}
In other words, $\Pi_{k,l}$ forms a Cauchy sequence for all $0\leq l\leq m+2$. Let $\displaystyle P=\lim_{k\to\infty}\Big(\sum_{l=0}^{m+2}\Pi_{k,l}\Big)$ be the limiting $(m+2)$-order polynomial. By letting $k_{1}=0$ and $k_{2}\to\infty$ and by $\Pi_{0,l}\equiv0$, we see
\begin{equation*}
    \|P\|_{L^{\infty}(B_{r})}\leq C\Big(\mathcal{A}+r^{m+2}\sum_{j=0}^{\infty}\omega(r_{j})\Big).
\end{equation*}
Moreover, for each $k\geq0$, a similar argument via the triangle inequality as in part (1) gives
\begin{align*}
    \|u-P\|_{L^{\infty}(B_{r_{k}})}\leq&\epsilon_{k}+\sum_{l=0}^{m+2}\lambda^{kl}\cdot\Big(\lim_{j\to\infty}\|\Pi_{j,l}-\Pi_{k,l}\|_{L^{\infty}(B_{r})}\Big)\\
    \leq&C\lambda^{k(m+2)}\cdot\Big(q^{k}\mathcal{A}+r^{m+2}\sum_{j=k}^{\infty}\omega(r_{j})+r^{m+2}\cdot\sum_{j=0}^{k-1}q^{k-j}\omega(r_{j})\Big)\\
    \leq&C(\frac{r_{k}}{r})^{m+2}\cdot\Big((\frac{r_{k}}{r})^{\mu}\mathcal{A}+r^{m+2}\int_{0}^{r_{k}}\frac{\omega(t)}{t}dt+r^{m+2}r_{k}^{\mu}\int_{r_{k}}^{r}\frac{\omega(t)}{t^{1+\mu}}dt\Big).
\end{align*}
Here, $\mu>0$ is a uniform constant such that $q=\lambda^{\mu}$.

\textbf{Proof of part (3).} The proof is similar to the proof of part (2) and we omit the details.

Then, we have finished the proof of Theorem~\ref{t1}.
\end{proof}

\vspace{2mm}

\noindent \textbf{Acknowledgments.}


P. Wang  was partially supported by National Natural Science Foundation of China (Grant No. 12101530), the Natural Science Foundation of Henan Province (262300421235),  the Sponsored by  Program for Science \& Technology Innovation Talents in Universities of Henan Province (26HASTIT040) and the Nanhu Scholars Program for Young Scholars of XYNU.
 L. Wu was partially supported by National Natural Science Foundation of China (Grant No. 12401133) and the Guangdong Basic and Applied Basic Research Foundation (2025B151502069).

\vspace{2mm}
\noindent \textbf{Conflict of interest.} The authors do not have any possible conflicts of interest.

\vspace{2mm}

\noindent \textbf{Data availability statement.}
Data sharing is not applicable to this article, as no data sets were generated or analyzed during the current study.

\end{document}